\newtheorem{thmA}{Theorem}
\numberwithin{equation}{section} 
\newtheorem{theorem}{Theorem}[section]
\newtheorem{prop}[theorem]{Proposition}
\newtheorem{proposition}[theorem]{Proposition}
\newtheorem{lemma}[theorem]{Lemma}
\newtheorem{corollary}[theorem] {Corollary}
\newtheorem*{theorem*}{1-2-3 Theorem} 
\theoremstyle{remark}
\theoremstyle{definition}
\def\F{\rm{F}} 
\def\Z{\mathbb Z}
\def\N{\mathbb N}
\def\R{\mathbb R}
\def\G{\Gamma}
\def\-{\overline}
\def\wh{\widehat}
\def\Fix{{\rm{Fix}}}
\def\cat{{\rm{CAT}}${\rm{(0)}}$ }
\def\Min{{\rm{Min}}}
\def\isom{{\rm{Isom}}}
\def\fix{{\rm{Fix}}}
\def\script{\mathfrak}
\def\G{\Gamma} 
\def\G{\Gamma}  
\def\g{\gamma}
\def\<{\langle}
\def\>{\rangle}
\subjclass[2010]{20F67, 20J05, (20E08, 20E18)} 
\begin{document}

\title[Profinite isomorphisms and fixed-point properties]{Profinite isomorphisms and fixed-point properties}

\author[Martin R. Bridson]{Martin R.~Bridson}
\address{Mathematical Institute\\
Andrew Wiles Building\\
Woodstock Road\\
Oxford}
\email{bridson@maths.ox.ac.uk}


\keywords{Grothendieck pairs, profinite properties, CAT$(0)$ spaces, property FA}


\begin{abstract}  We describe a flexible construction that produces triples of finitely generated, residually finite
groups $M\hookrightarrow P \hookrightarrow \G$, where the maps induce isomorphisms of profinite completions
$\wh{M}\cong\wh{P}\cong\wh{\G}$, but $M$ and $\G$ have Serre's property FA while $P$ does not.
In this construction, $P$ is finitely presented and $\G$ is of type ${\rm{F}}_\infty$. More generally, given any positive
integer $d$, one can demand that $M$ and $\G$ have a fixed point whenever they act by semisimple isometries on
a complete CAT$(0)$ space of dimension at most $d$, while $P$ acts without a fixed point on a tree.
\end{abstract}

\maketitle

\section{Introduction}                  

In the quest for profinite invariants of discrete groups, fixed-point properties have been a source of disappointment. For example, Aka  \cite{aka} proved that the profinite completion of a finitely generated, residually finite group does not determine whether the group has property (T), i.e.~whether
 the group can act without a global fixed point as a group of affine isometries of a Hilbert space. 
Cheetham-West, Lubotzky, Reid and Spitler  \cite{tam} proved a similar theorem for actions on trees:
they construct pairs of finitely presented, residually finite groups $G_1$ and $G_2$ such that $\wh{G}_1\cong \wh{G}_2$  but $G_1$ has Serre's property FA whereas $G_2$ does not. 
(Here, $\wh{G}_i$ denotes the profinite completion on $G_i$.)

In the present paper, we will  improve upon this last result in two ways. First, we construct groups with these properties 
for which  $(G_2, G_1)$ is a Grothendieck pair,
i.e.~the isomorphism $\wh{G}_1\cong \wh{G}_2$ is induced by a monomorphism of discrete groups 
$G_1 \hookrightarrow G_2$ (cf.~\cite{tam} Question 4.1). 
Secondly, we extend this result from actions on trees (the 1-dimensional case) to actions on 
$d$-dimensional \cat spaces,  with $d\ge 1$ arbitrary. 

We say that a group $G$ has {\em property $\Fix_d$} if $G$ fixes a point
whenever it acts by semsimple isometries on a complete $\cat$ space of dimension at most $d$.
Every isometry of a simplicial tree is semisimple, so $\Fix_1$ implies Serre's property FA (and extends it to cover
actions on complete $\R$-trees).

\begin{thmA}\label{t:main}
For every integer $d\ge 1$, there exist triples of  residually finite
groups $M\overset{i}\hookrightarrow P\overset{j}\hookrightarrow \G$ such that:
\begin{enumerate}
\item $i$ and $j$ induce isomorphisms $\wh{M}\cong\wh{P}\cong\wh{\G}$;
\item $M$ is finitely generated, $P$ is finitely presented, and $\G$ is of type $F_\infty$;
\item $M$ and $\G$ have property $\Fix_d$, but 
\item $P$ is a non-trivial amalgamated free-product and therefore acts on a simplicial tree without a global fixed point. 
\end{enumerate}
\end{thmA}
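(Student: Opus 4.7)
The plan is to combine Bridson--Grunewald-style Grothendieck-pair techniques with an amalgamated-product arrangement at the middle level, so that the middle group $P$ is forced to act on a tree without a global fixed point while its companions $M$ and $\Gamma$ enjoy property $\Fix_d$. The construction rests on two ingredients: a finitely presented super-perfect group $Q$ of type $F_\infty$ with $\wh{Q}=1$ (obtained by refining a Higman-style construction to arrange $H_2(Q;\Z)=0$ and the required finiteness), and a finitely generated, residually finite group $M$ with $\Fix_d$ admitting an epimorphism $\pi\colon M\twoheadrightarrow Q$ whose kernel $N$ is finitely generated. Producing $M$ is the key input: one starts from a family witnessing $\Fix_d$ (a higher-rank arithmetic lattice, a Kazhdan group of controlled dimension, or a suitably rigid \cat group) and performs a Rips-style surgery that surjects onto $Q$ with a f.g.\ kernel while preserving $\Fix_d$. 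The ``flexible construction'' announced in the abstract is presumably this packaging, made uniform in~$d$.

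Define $P:=M\ast_N \widetilde{M}$, the amalgamated free product of two copies of $M$ along the normal subgroup $N$. Since $Q\neq 1$ we have $[M:N]=\infty$, so $P$ is a nontrivial amalgam and acts on its Bass--Serre tree without a global fixed point, giving~(4); $P$ is finitely presented because $M$ is f.p.\ and $N$ is f.g. The first-factor inclusion embeds $M$ into $P$ as a f.g.\ $\Fix_d$-subgroup. The isomorphism $\wh{M}\cong\wh{P}$ is formal from $\wh{Q}=1$: the quotient $P/N=Q\ast Q$ has $\wh{Q\ast Q}=1$, so $N$ topologically generates $\wh{P}$ and $\wh{M}\twoheadrightarrow\wh{P}$ is surjective; injectivity comes from the fact that every finite quotient $\phi\colon M\to F$ extends to $P$ by applying $\phi$ to both amalgam factors (they agree on $N$), thereby separating any nontrivial element of $M$ inside some finite quotient of $P$.

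The remaining task -- and the heart of the proof -- is to construct $\Gamma\supset P$ of type $F_\infty$ sharing the common profinite completion and having $\Fix_d$. I would build $\Gamma$ as a direct limit of a controlled tower of extensions of $P$: at each step one adjoins elements, drawn from an ambient $\Fix_d$-group, that witness fixed points for a prescribed $d$-dimensional \cat action that a finitely generated supergroup of $P$ might carry, and one uses the super-perfectness of $Q$ (in particular $H_2(Q;\Z)=0$) to ensure that each adjunction is profinitely invisible. A bookkeeping enumeration of the countably many potential \cat actions and careful management of finiteness properties at each stage produces the desired $\Gamma$.

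The main obstacle is the construction of $\Gamma$: enforcing $\Fix_d$ on an enlargement of $P$ while simultaneously preserving the profinite completion and arranging $F_\infty$ finiteness. A secondary obstacle is the production of the initial $M$, which requires a class of $\Fix_d$-groups sufficiently flexible to admit a surjection onto the prescribed super-perfect $Q$ with finitely generated kernel -- a Rips-style step likely requiring small-cancellation or random-group technology adapted to the $\Fix_d$ setting.
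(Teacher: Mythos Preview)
Your proposal has genuine gaps at both ends of the triple; the middle step is the only part that is close to rigorous, and even there residual finiteness of $P=M\ast_N\widetilde M$ is never established (your argument only shows that $M$ injects into $\wh P$, not that $P$ does).

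The construction of $M$ is speculative. There is no known ``Rips-style surgery'' that takes a higher-rank lattice or Kazhdan group and produces a surjection onto a prescribed $Q$ with finitely generated kernel while \emph{preserving} $\Fix_d$; the Rips construction outputs hyperbolic groups, which certainly do not have $\Fix_d$. The paper bypasses this entirely via Theorem~B: if $A$ is finitely generated with finite abelianisation and $B$ is finite, then $A\wr B$ has $\Fix_{|B|-1}$. So one takes an ordinary Rips sequence $1\to N\to G\to Q\to 1$ with $N$ and $G$ perfect (arranged using $H_2(Q,\Z)=0$ and $\wh Q=1$), fixes $B$ with $|B|=d+1$, and sets $M=N\wr B$ and $\Gamma=G\wr B$; both have $\Fix_d$ for free.

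Your construction of $\Gamma$ is not a construction. One cannot enumerate ``the countably many potential \cat actions'' of a group, and a direct limit of an infinite tower of extensions will not be of type $F_\infty$. In the paper, $\Gamma=G\wr B$ is of type $F_\infty$ simply because $G$ has a finite aspherical presentation. The middle group is $P=P_B\rtimes B$, where $P_B<G^B$ is the fibre product of $|B|$ copies of $G\to Q$; this is finitely presented by the 1-2-3 Theorem, and it surjects onto $Q$ (which was \emph{chosen} to be a nontrivial amalgam), so $P$ itself splits nontrivially. The profinite isomorphisms are handled by the Platonov--Tavgen lemma and its fibre-product extension, together with the $B$-equivariance of the whole picture.

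In short, the idea you are missing is that wreathing with a finite group of the right size is what manufactures $\Fix_d$ on both ends simultaneously; once you have that, the rest is the standard Bridson--Grunewald fibre-product machinery with $d+1$ factors instead of two.
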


An artefact of our proof is that although $M$ and $\Gamma$ have $\Fix_d$, they each contain a subgroup of finite index that can act on a tree without fixing a point.

The fixed-point properties required in the above theorem will be established using the following criterion, which is drawn from 
the circle of ideas developed in \cite{mb:mcg} and \cite{mb:helly}. 

\begin{thmA}\label{t:thmB}
If $A$ is a finitely generated group with finite abelianisation and $B$ is a finite group, then
$A\wr B$ has $\Fix_d$ for $d= |B|-1$.
\end{thmA}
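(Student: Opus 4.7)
The plan is to prove that any semisimple isometric action of $W := A \wr B = A^B \rtimes B$ on a complete \cat space $X$ of dimension at most $n - 1$ (with $n := |B|$) has a global fixed point. As a reduction, it suffices to show $\Fix(N) \neq \emptyset$ for the base normal subgroup $N := A^B$: for then $B = W/N$ acts on the closed convex set $\Fix(N)$ and, being finite, fixes a point there, which is automatically $W$-fixed.

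Writing $N = A_1 \times \cdots \times A_n$ and $F_i := \Fix(A_i)$, pairwise commutation of the $A_i$ gives that each $F_j$ is $A_i$-invariant and $\Fix(N) = \bigcap_i F_i$; the conjugation action of $B$ permutes the $A_i$ (hence the $F_i$) transitively. I would first show each $F_i \neq \emptyset$, and then show the $F_i$ have a common point. For non-emptiness: suppose $F_1 = \emptyset$, and choose a minimal non-empty $A_1$-invariant closed convex $Y_1 \subseteq X$, automatically preserved by every commuting $A_j$. By the splitting theorem for minimal semisimple actions on \cat spaces, $Y_1$ would carry a Euclidean de Rham factor on which $A_1$ acts by translations; finite abelianisation of $A_1$ forces the translation to be trivial, contradicting minimality without fixed points.

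For the intersection step: assume for contradiction $F_i \cap F_j = \emptyset$. Choose $\sigma \in B$ with $\sigma A_i \sigma^{-1} = A_j$, so $\sigma F_i = F_j$. Using convexity, completeness, and uniqueness of nearest-point projections in \cat spaces, the projection of $F_j$ onto $F_i$ is preserved by every $A_k$ that commutes with both $A_i$ and $A_j$, and a symmetric argument (swapping the roles of the pair using $\sigma$) shows $A_i$ and $A_j$ also fix the relevant nearest point. This places a point in $\Fix(N)$, contradiction. Hence all pairs $F_i \cap F_j$ are non-empty, and a Helly-type argument in the spirit of \cite{mb:helly}, applied to $n$ closed convex sets in a \cat space of dimension $\leq n - 1$ together with their $B$-symmetry, produces the common point.

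The chief obstacle is the non-emptiness of the $F_i$: finite abelianisation of $A$ does not by itself imply any \cat fixed-point property (witness $\SL_2(\Z)$ on the hyperbolic plane), so the argument must exploit the wreath structure---the $B$-symmetry, the commuting copies, and the dimension bound---in concert, not merely one $A_i$ at a time.
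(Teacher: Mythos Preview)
Your overall reduction (show $\Fix(N)\neq\emptyset$ for $N=A^B$, then let the finite group $B$ act on the complete \cat space $\Fix(N)$) matches the paper. But the proposal is not a proof, and your final paragraph effectively concedes this: the sketch for non-emptiness of $F_i=\Fix(A_i)$ treats $A_1$ in isolation, and your own example of $\SL_2(\Z)$ on $\mathbb{H}^2$ (semisimple, minimal, finite abelianisation, no Euclidean factor, no fixed point) defeats any such one-factor argument. The individual steps also fail on their own terms: minimal $A_1$-invariant closed convex subsets need not exist in an arbitrary complete \cat space (there is no compactness to feed a Zorn argument); there is no general ``splitting theorem'' producing a Euclidean de~Rham factor for a minimal semisimple action of a group with finite abelianisation; and a chosen minimal $Y_1$, even if it existed, would not automatically be $A_j$-invariant, since such minimal sets are not unique.

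The paper closes this gap by using all $|B|=d{+}1$ commuting copies simultaneously to show that no $A_b$ contains a hyperbolic element. The key lemma is: if ${\rm{Hom}}(K_i,\R)=0$ for $i=0,\dots,d$ and $X$ contains no $(d{+}1)$-flat, then no action of $K_0\times\cdots\times K_d$ on $X$ can place a hyperbolic in every $K_i$. The proof is by induction on $d$: if $\gamma_0\in K_0$ is hyperbolic, the centraliser preserves $\Min(\gamma_0)\cong Y\times\R$ and acts by translations on the $\R$-factor, so $K_1\times\cdots\times K_d$ (having no homomorphisms to $\R$) acts on $Y$, which contains no $d$-flat, while each $\gamma_i$ remains hyperbolic there---contradicting the inductive hypothesis. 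Since $\dim X\le d$ excludes $(d{+}1)$-flats and the $A_b$ are pairwise conjugate in $A\wr B$ (so one contains a hyperbolic iff all do), every element of each $A_b$ is elliptic. A result from \cite{mb:mcg} then shows that a finite set of elliptics, in the presence of $d$ commuting conjugate copies, has a common fixed point; hence each $F_b\neq\emptyset$. The intersection $\bigcap_b F_b\neq\emptyset$ then follows from the elementary circumcentre fact for commuting subgroups with non-empty fixed-point sets---no Helly-type argument, and no further use of the dimension bound, is needed at that stage.
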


The first steps in our construction  of the triples $M\hookrightarrow P\hookrightarrow \G$ in Theorem \ref{t:main} follow the template for constructing finitely presented
Grothendieck pairs that originates in  \cite{BG} and is explicit in  Section 8 of \cite{mb:pcmi}. We craft  
finitely presented groups $Q$ that enjoy an array of properties  
relevant to our aims (Section \ref{s:input});
we use a suitably-adapted form of the Rips construction (Proposition \ref{p:rips}) to produce short exact sequences $1\to N\to G \to Q\to 1$ with $G$  finitely presented and residually finite, $N$ finitely generated, and both $N$ and $G$ perfect; 
and we  take a fibre product of several copies of $G \to Q$ to produce $N^d\hookrightarrow P_d\hookrightarrow G^d$ with $P_d$ finitely presented. (A novel feature here is that we take the fibre product of several copies of $G \to Q$, not just two.)
The  triples $M\overset{i}\hookrightarrow P\overset{j}\hookrightarrow \G$  we seek
 are obtained by taking  finite extensions of $N^d, P_d$ and $G^d$ in a way that allows 
 us to apply Theorem~\ref{t:thmB}.  
 
There is a great deal of flexibility in this construction  -- see Section \ref{s:last}.

\section{Preliminaries}

In this section we gather the basic definitions and facts we need concerning profinite completions of groups and isometries of $\cat$ spaces.

\subsection{Profinite completions}

If $M_1<M_2$ are normal 
subgroups of finite index in a group $G$, then there is a natural map $G/M_1\to G/M_2$. Thus the
finite quotients of $G$ form a directed system.
The {\em profinite completion} of $G$ is the inverse limit of this system:
$$
\wh{G} := \lim_{\leftarrow} G/M.
$$ 
 The natural map   $i: G\to\wh{G}$  is injective if and only if $G$ is residually finite. If $G$
 is finitely generated then, for every finite group $Q$, composition with $i$ defines a bijection ${\rm{Hom}}(\wh{G}, Q)
 \to {\rm{Hom}}({G}, Q)$ that restricts to a bijection on the set of epimorphisms.
 In particular, $G$ and $\wh{G}$ have the same set of finite  images, which we denote by $\script{F}(G)$. 
 Thus $\wh{G}_1\cong \wh{G}_2$ implies $\script{F}(G_1)= \script{F}(G_2)$. Less obviously, 
 for finitely
generated groups, $\script{F}(G_1)= \script{F}(G_2)$ implies $\wh{G}_1\cong \wh{G}_2$ -- see  \cite[pp.~88--89]{RZ}.
 (Note that $\wh{G}_1\cong \wh{G}_2$ does not imply that there are any non-trivial homomorphisms ${G}_1\to G_2$.)

A property $\script{P}$ of finitely generated, residually finite groups is said to be a {\em profinite invariant} if
 $\wh{G}_1\cong \wh{G}_2$ implies that $G_2$ has $\script{P}$ whenever $G_1$ has $\script{P}$. 
 Theorem \ref{t:main} shows that $\Fix_d$ is not a profinite invariant. 

A pair of finitely generated, residually finite groups $G_1\overset{\iota}\hookrightarrow G_2$ is called a {\em Grothendieck pair}
\cite{groth} if  the induced map $\wh{\iota}: \wh{G}_1\to \wh{G}_2$ is an isomorphism.  For fixed 
$G_2$, there can be infinitely many non-isomorphic subgroups $G_1$ such that $G_1\hookrightarrow G_2$ is a Grothendieck
pair, even if one requires both $G_1$ and $G_2$ to be finitely presented \cite{mb:jems}.

\subsection{Isometries of \cat spaces}

We refer the reader to \cite{BH} for basic facts about \cat spaces.  
We write $\isom (X)$ for the group of isometries of a \cat space $X$ and $\fix (H)$ for the set of points in $X$ fixed by each element of a subset $H\subset \isom (X)$. Note that $\fix (H)$ is closed and convex.

If $X$ is complete, each closed, non-empty bounded subset is contained in a unique smallest ball; see \cite{BH} page 178. If the bounded subset is an orbit of a subgroup $H<\isom(X)$, then the centre of the ball will be fixed by $H$. This proves the
following standard proposition.

\begin{prop}\label{l:finite}
If $X$ is a complete \cat space, then every finite subgroup of $\isom(X)$ fixes a point in $X$.
\end{prop}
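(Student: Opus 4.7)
The proposition is essentially a direct consequence of the fact recalled just before its statement, so the plan is really just to spell out the circumcentre argument cleanly.

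The plan is to let $H$ be a finite subgroup of $\isom(X)$, pick an arbitrary point $x_0\in X$, and consider the orbit $Hx_0=\{h\cdot x_0 : h\in H\}$. Since $H$ is finite, this orbit is a finite, hence bounded, non-empty subset of $X$. Because $X$ is complete \cat, I can invoke the fact quoted from \cite{BH}*{p.~178}: there is a unique smallest closed ball $\bar B(c,r)$ in $X$ that contains $Hx_0$, and the centre $c$ is determined by this minimality.

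The key observation is then that this minimal enclosing ball is an isometry invariant of the set it encloses. For any $h\in H$, the image $h\cdot \bar B(c,r) = \bar B(h\cdot c, r)$ is again a closed ball of the same radius, and it contains $h\cdot (Hx_0)=Hx_0$. By the uniqueness of the smallest enclosing ball, $\bar B(h\cdot c,r)=\bar B(c,r)$, which forces $h\cdot c = c$. Since this holds for every $h\in H$, the point $c$ lies in $\fix(H)$, completing the proof.

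The argument is entirely routine once one has the existence and uniqueness of the circumcentre of a bounded set in complete \cat space, so the only ``obstacle'' is really a bookkeeping one: one must be careful to cite the appropriate result from \cite{BH} rather than reprove it. Since the author explicitly points to page 178 of \cite{BH} for this, there is nothing further to do beyond the two-line orbit/uniqueness argument above.
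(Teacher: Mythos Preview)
Your proposal is correct and follows exactly the approach sketched in the paper: take a finite orbit, apply the existence and uniqueness of the circumcentre from \cite{BH} p.~178, and use uniqueness plus $H$-invariance of the orbit to conclude that the circumcentre is fixed by $H$. There is nothing to add.
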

 
By combining  the preceding bounded-orbit observation with the fact that $\fix (H)$
is itself a \cat space,  
one can prove the following standard fact  -- see \cite[Corollary 2.5]{mb:mcg}, for example.

\begin{prop}\label{c2.5}
Let $X$  be a complete \cat space. 
If the subgroups $H_1,\dots, H_n<\isom(X)$ commute and $\fix(H_i)$ is non-empty for $i=1,\dots,n$, then $\bigcap_i\fix (H_i)$ is non-empty.
\end{prop}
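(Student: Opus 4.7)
The plan is to induct on $n$. The base case $n=1$ is exactly the hypothesis that $\fix(H_1)$ is non-empty. For the inductive step, I would set $Y = \bigcap_{i=1}^{n-1}\fix(H_i)$, which is non-empty by induction. Since each $\fix(H_i)$ is closed and convex, so is $Y$, and therefore $Y$ inherits from $X$ the structure of a complete \cat space in its own right.

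The next step is to use the commutativity hypothesis to show that $H_n$ preserves $Y$: given $h_n \in H_n$, $h_i \in H_i$ and $y \in Y$, we compute $h_i(h_n y) = h_n(h_i y) = h_n y$, so $h_n y \in \fix(H_i)$ for each $i < n$, and hence $h_n y \in Y$. The problem therefore reduces to producing a point of $Y$ fixed by the isometric action of $H_n$ on the complete \cat space $Y$. To invoke the bounded-orbit observation from the paragraph preceding Proposition \ref{l:finite}, I need an $H_n$-orbit inside $Y$ that is bounded. Choosing any $y_0 \in Y$ and any $x_n \in \fix(H_n) \subset X$ (which exists by hypothesis), for every $h \in H_n$ we have
\[
d(h y_0, x_n) \;=\; d(h y_0, h x_n) \;=\; d(y_0, x_n),
\]
so $H_n \cdot y_0$ lies in a ball of radius $d(y_0, x_n)$ about $x_n$. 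This orbit sits inside $Y$, and the unique smallest enclosing ball of the orbit, formed inside the complete \cat space $Y$, has a centre that is fixed by $H_n$; this centre is a point of $\bigcap_{i=1}^{n}\fix(H_i)$.

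The one subtlety to be careful about is that the circumcentre must be produced inside $Y$ rather than merely in the ambient $X$ — otherwise we would obtain a fixed point of $H_n$ that might not lie in the other fixed sets. This is handled cleanly by exploiting that $Y$, as a closed convex subset of the complete \cat space $X$, is itself a complete \cat space, so the bounded-orbit/circumcentre construction can be carried out intrinsically in $Y$. That this is legitimate is precisely the content of the hint supplied immediately before the statement of the proposition.
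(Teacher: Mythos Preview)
Your proof is correct and is precisely the argument the paper has in mind: the paper does not write out a proof but only hints that one should combine the bounded-orbit/circumcentre observation with the fact that $\fix(H)$ is itself a complete \cat space, and your induction with $Y=\bigcap_{i<n}\fix(H_i)$ does exactly this.
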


For an isometry 
$\gamma\in\isom (X)$,
$$\Min (\gamma) := \{ p\in X \mid d(p,\gamma.p) = |\gamma|\},$$
where $|\g| := \inf\{ d(x,\g.x) \mid x\in X\}$. By definition, $\g$ is {\em semisimple} if
$\Min (\gamma)$ is non-empty. Every isometry of a complete $\R$-tree is semisimple.  
Semisimple isometries are divided into {\em hyperbolics} (also called loxodromics), for which
$|\g |>0$, and {\em elliptics}, which are the isometries with $\fix (\gamma)\neq\emptyset$. 
If $\g$ is hyperbolic then there exist $\g$-invariant isometrically embedded 
lines $\R\hookrightarrow X$ on which $\g$ acts as a translation by $|\g|$; each such line is called an axis for $\g$. The union of these axes is $\Min(\g)$.
 The following extract from pages 229--231 of \cite{BH}  summarizes the
properties of $\Min (\gamma)$ that we require.

\begin{prop}\label{p:min}
Let $X$ be a complete \cat space and let $\g\in\isom(X)$ be a hyperbolic isometry. Then,
\begin{enumerate}
\item $\Min (\gamma)$ splits isometrically $\Min (\gamma)= Y\times \R$, where $Y\times\{0\}$
 is a closed, convex subspace of $X$;
 \item $\g$ acts trivially on $Y$ and acts as translation by $|\g|$ on each of the lines $\{y\}\times\R$;
 \item the centraliser $C(\g)<\isom(X)$ leaves $\Min (\gamma)$ and its splitting invariant,
 acting by translations of the second factor;
 \item if $\delta\in C(\g)$ is hyperbolic, then $\Min (\gamma)$ contains an axis for $\delta$.
\end{enumerate}
\end{prop}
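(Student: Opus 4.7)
The plan is to deduce all four parts from two standard pillars of \cat geometry: the displacement function $d_\gamma(x):=d(x,\gamma x)$ is continuous and convex on any \cat space, and the Flat Strip Theorem, which says that two parallel geodesic lines in a complete \cat space bound an isometrically embedded flat strip. Since $\Min(\gamma)$ is the level set where $d_\gamma$ attains its infimum $|\gamma|$, convexity and continuity of $d_\gamma$ immediately give that it is closed and convex. For (1) and (2), I would first show that any two axes $\ell_1,\ell_2$ of $\gamma$ are parallel: the function $t\mapsto d(\ell_1(t),\ell_2(t))$ is convex and $|\gamma|$-periodic, hence bounded, hence constant. The Flat Strip Theorem then produces, for each pair of axes, a flat strip along which $\gamma$ translates. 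Assembling these strips realises $\Min(\gamma)$ as a union of parallel axes indexed by a convex transversal $Y$, yielding the product decomposition $\Min(\gamma)=Y\times\R$ with $\gamma$ trivial on $Y$ and acting as translation by $|\gamma|$ on each line $\{y\}\times\R$.

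For (3), if $\delta\in C(\gamma)$ then $d_\gamma(\delta x)=d(\delta x,\gamma\delta x)=d(\delta x,\delta\gamma x)=d(x,\gamma x)=d_\gamma(x)$, so $\delta$ preserves $\Min(\gamma)$. Because $\delta$ sends axes of $\gamma$ to axes of $\gamma$, it permutes the lines $\{y\}\times\R$ of the splitting, so the product decomposition is $\delta$-invariant. Since $\gamma$ commutes with $\delta$ and acts by a non-trivial positive translation on the second factor, the induced action of $\delta$ on the $\R$-factor cannot reverse orientation, so it is itself a translation.

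For (4), apply (1)--(3) with $\delta$ in place of $\gamma$: $\Min(\delta)$ is a closed convex $\gamma$-invariant subspace splitting as $Y'\times\R$, and each line $\{y'\}\times\R$ is a $\delta$-axis. Choose any $x_0\in\Min(\gamma)$ and let $p$ be the nearest-point projection of $x_0$ onto $\Min(\delta)$. Because $\gamma$ preserves $\Min(\delta)$ and nearest-point projection onto a closed convex subset of a \cat space is $1$-Lipschitz and commutes with $\gamma$, one has $d(p,\gamma p)\le d(x_0,\gamma x_0)=|\gamma|$, so $p\in\Min(\gamma)\cap\Min(\delta)$. Writing $p=(y_0,t_0)$, the symmetric form of (3) tells us that $\delta$ preserves $\Min(\gamma)$, hence $\delta^n(p)=(y_0,t_0+n|\delta|)\in\Min(\gamma)$ for every $n\in\Z$; by convexity of $\Min(\gamma)$ the whole line $\{y_0\}\times\R$ lies in $\Min(\gamma)$, and this line is the required $\delta$-axis. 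The main obstacle I anticipate is precisely this projection-and-extension argument for (4): convexity of $d_\gamma$ alone is not enough to locate a $\delta$-axis inside $\Min(\gamma)$, and one must use both the $1$-Lipschitz property of nearest-point projections in \cat spaces and the $\gamma$-invariance of $\Min(\delta)$ coming from (3) to ensure that the intersection $\Min(\gamma)\cap\Min(\delta)$ is non-empty and contains an axis rather than a single point.
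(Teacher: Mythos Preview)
The paper does not prove this proposition at all: it is stated explicitly as ``an extract from pages 229--231 of \cite{BH}'' and is simply quoted from Bridson--Haefliger without argument. So there is no proof in the paper to compare your proposal against.

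That said, your sketch is essentially the standard proof one finds in \cite{BH} (Theorem II.6.8 and the surrounding discussion). A couple of remarks. For (1) and (2), the phrase ``assembling these strips'' hides real work: pairwise flat strips between axes do not automatically glue to a global product $Y\times\R$; in \cite{BH} this is handled via the Product Decomposition Theorem (II.2.14), which you should cite or reproduce rather than gesture at. For (3), your orientation argument is fine, but note that the paper's statement only claims $C(\gamma)$ acts by translations on the $\R$-factor, not that each element acts as a translation of the whole product, so you have proved what is needed. Your argument for (4) is correct and nicely self-contained: the key steps---$\gamma$-equivariance of the nearest-point projection onto $\Min(\delta)$ to locate $p\in\Min(\gamma)\cap\Min(\delta)$, then using $\delta$-invariance of $\Min(\gamma)$ together with convexity and closedness to pass from the orbit $\{\delta^n p\}$ to the full axis---are exactly right.
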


\section{A Rips construction and input groups}

The purpose of this section is to produce the short exact sequences $1\to N\to G\to Q\to 1$ described in the
introduction.

\subsection{The input groups $Q$}\label{s:groups}\label{s:input}

Our constructions require as input a group $Q$ with the following properties:
\begin{enumerate}
\item [$\bullet$] $Q$ is of type $\F_\infty$ (i.e.~has a classifying space $K(Q,1)$ with finite skeleta);
\item [$\bullet$] $H_2(Q,\Z)=0$;
\item [$\bullet$] $\wh{Q}=1$;
\item [$\bullet$] $Q$ is a non-trivial amalgamated free product (and therefore does not have FA).
\end{enumerate}

There are many  ways to concoct groups $Q$ with these properties. Indeed, {\em every} finitely presented group  
can be embedded (explicitly, with controlled geometry) into a  finitely presented group
 that has no non-trivial finite quotients
\cite{mb:embed};
by replacing this enveloping group with its universal central extension one can  force it to have trivial second homology;
and by taking a free product of two copies of the resulting group one obtains a group $Q$ satisfying all of the above
properties. If the group that one starts with is $F_\infty$ (resp. type $F$), then so is $Q$.

One can also find explicit groups of the desired form in the literature. For example,  from
\cite{BG} one could take
$$ 
Q= \< a, b, \alpha, \beta \mid ba^{-p}b^{-1}a^{p+1},\, \beta\alpha^{-p}\beta^{-1}\alpha^{p+1},\, 
[bab^{-1},a]\beta^{-1},\, [\beta\alpha\beta^{-1},\alpha]b^{-1}\>.
$$

\subsection{A convenient version of the Rips construction}\label{s:rips} 

There are many refinements 
of the Rips construction in the literature, with various properties imposed on the groups constructed.
The following version suits our needs.

\begin{prop}\label{p:rips}
There exists an algorithm that, given 
a finite presentation $ \< X\mid R\>$ of a group $Q$,
will construct a finite aspherical presentation $ \< X\cup \{a_1,a_2\}\mid \widetilde{R}\cup V\>$
for a group $G$ so that:
\begin{enumerate}
\item $G$ is hyperbolic and residually finite;
\item $N:=\<a_1,a_2\>$ is normal in $G$;
\item $G/N$ is isomorphic to $Q$; 
\item $G$ is perfect if $Q$ is perfect;
\item if $\wh{Q}=1$ and $H_2(Q,\Z)=0$, then $N$ and $G$ are both perfect.
\end{enumerate}
\end{prop}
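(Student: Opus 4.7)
The construction follows the template of the classical Rips short exact sequence, with two simultaneous refinements imposed on the auxiliary words: they must satisfy a small-cancellation condition (to give hyperbolicity, asphericity, and access to residual finiteness), and they must lie in the commutator subgroup of $F(a_1,a_2)$ (to control abelianisation). Explicitly, given $\langle X\mid R\rangle$, set
\[\widetilde R=\{r\cdot u_r:r\in R\},\qquad V=\{xa_ix^{-1}v_{x,i,+}^{-1},\ x^{-1}a_ix\,v_{x,i,-}^{-1}:x\in X,\ i=1,2\},\]
with each $u_r$ and each $v_{x,i,\pm}$ a long word in $a_1,a_2$. Choosing these words algorithmically so that the resulting presentation satisfies a $C'(1/6)$ condition, standard small-cancellation theory yields that $G$ is hyperbolic and that the presentation is aspherical. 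Residual finiteness is then secured by invoking a known refinement (Wise, or Haglund--Wise, via cubulation and virtual compact specialness) that can be run within the same combinatorial framework.

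Parts (2) and (3) are immediate: quotienting by $N:=\langle\langle a_1,a_2\rangle\rangle^G$ kills $V$ and collapses $\widetilde R$ to $R$, so $G/N\cong Q$. Moreover, for every generator $s\in X^{\pm 1}\cup\{a_1^{\pm 1},a_2^{\pm 1}\}$ the relations in $V$ make $sa_is^{-1}$ a word in $a_1,a_2$; inducting on word length then shows that $ga_ig^{-1}$ is a word in $a_1,a_2$ for every $g\in G$. Hence $N=\langle a_1,a_2\rangle$ is two-generated as a subgroup (not merely normally generated), and in particular $N^{\mathrm{ab}}$ is a finitely generated abelian group.

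For (4) and (5), arrange each $u_r$ and each $v_{x,i,\pm}$ to lie in $[F(a_1,a_2),F(a_1,a_2)]$, for instance as nested commutators of the required length. Then in $G^{\mathrm{ab}}$ the $V$-relations collapse to $a_i=0$ (since $xa_ix^{-1}$ abelianises to $a_i$ while $v_{x,i,\pm}$ abelianises to $0$), and the $\widetilde R$-relations collapse to the defining abelian relations of $Q^{\mathrm{ab}}$. Hence $G^{\mathrm{ab}}\cong Q^{\mathrm{ab}}$, which gives (4). For (5), $\widehat Q=1$ forces the finitely generated abelian group $Q^{\mathrm{ab}}$ to be trivial, and hence $G^{\mathrm{ab}}=0$. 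Conjugation makes $N^{\mathrm{ab}}$ a $Q$-module, producing a homomorphism $Q\to\mathrm{Aut}(N^{\mathrm{ab}})$; since the automorphism group of a finitely generated abelian group is residually finite and $\widehat Q=1$, this homomorphism is trivial, so the coinvariants satisfy $(N^{\mathrm{ab}})_Q=N^{\mathrm{ab}}$. The Stallings--Stammbach five-term exact sequence
\[H_2(Q,\Z)\to (N^{\mathrm{ab}})_Q\to G^{\mathrm{ab}}\to Q^{\mathrm{ab}}\to 0,\]
combined with $H_2(Q,\Z)=0$ and $G^{\mathrm{ab}}=0$, forces $(N^{\mathrm{ab}})_Q=0$, and therefore $N^{\mathrm{ab}}=0$.

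The main obstacle is the combinatorial engineering of the auxiliary words: they must be long enough for small cancellation, lie in the commutator subgroup for the abelianisation calculation, and simultaneously take the specific form needed to invoke a cubulation-based residual-finiteness theorem. That these can be arranged at once is known from the literature, but providing a clean algorithm balancing these constraints is where the substantive work lies; once the words are in hand, the homological identifications and the action-of-$Q$-on-$N^{\mathrm{ab}}$ argument are routine.
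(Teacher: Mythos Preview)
Your proposal is correct and takes essentially the same approach as the paper: for items (1)--(4) the paper simply cites prior work (Proposition~2.10 of \cite{mb:gilb}, with residual finiteness attributed to Wise), while you spell out the Rips-type construction more explicitly; for the novel item (5), your argument is identical to the paper's---the five-term exact sequence together with the observation that $Q$ must act trivially on the finitely generated abelian group $N^{\mathrm{ab}}$ because $\mathrm{Aut}(N^{\mathrm{ab}})$ is residually finite and $\widehat{Q}=1$, forcing $N^{\mathrm{ab}}=(N^{\mathrm{ab}})_Q=0$.
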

 
\begin{proof} With the exception of item (5), the proof is covered by 
Proposition 2.10 of  \cite{mb:gilb}. (The
crucial property of residual finiteness is due to Wise \cite{wise1, wise2}.)

For item (5), we consider the 5-term exact sequence extracted from the corner of the LHS spectral sequence for
$1\to N\to G\to Q\to 1$:
$$
H_2(Q,\Z) \to H_0(Q,\, H_1(N,\Z)) \to H_1(G,\Z) \to H_1(Q,\Z)\to 0.
$$
As all the other terms are zero, $H_0(Q,\, H_1(N,\Z))=0$.
By definition, $H_0(Q,\, H_1(N,\Z))$ is the group of coinvariants for the action of $Q$ on $ H_1(N,\Z)$ that is
induced by conjugation in $G$. As the abelian group $ H_1(N,\Z)$ is finitely generated, 
its automorphism group is residually finite.
As $Q$ has no non-trivial finite quotients, its action on $ H_1(N,\Z)$ must be trivial. Therefore
$H_1(N,\Z)=H_0(Q,\, H_1(N,\Z))=0$. 
\end{proof}

\section{Fibre products}

Our proof of Theorem \ref{t:main} relies on the various properties of fibre products that we establish in this section.
These properties cover three topics: the finiteness properties of fibre products, their behaviour with respect to
profinite completions, and their interaction with wreath products.
 
\subsection{Fibre products and finiteness properties}

For $i=1,\dots,d$, let  $p_i:G_i\to Q$ be a homomorphism of groups. The {\em fibre product} of this family
of maps is
$$
P_d = \{ (g_1,\dots,g_d) \mid p_i(g_i)=p_j(g_j),\ i,j=1,\dots,d\} < G_1\times\dots\times G_d.
$$
The case  $p_1=\dots=p_d$ will be of particular interest in this article.

$P_d$ is the preimage of the diagonal subgroup 
$$Q_d^\Delta := \{(q,\dots,q) \mid q\in Q\}< Q\times\dots\times Q$$
and there is a short exact sequence 
\begin{equation}\label{e1}
1\to N^{(d)} \to P_d \to Q_d^\Delta\to 1
\end{equation}
where $N_i= \ker p_i$ and $N^{(d)}= N_1\times\dots\times N_d$.


We need a criterion to ensure that $P_d$ is finitely presented; we will deduce this from the following 
{\em Asymmetric 1-2-3 Theorem}
\cite{bhms}. 

\begin{theorem}[\cite{bhms}] \label{t:123}
For $i=1,2$, let $1\to N_i\to G_i\overset{p_i}\to Q\to 1$ be a short exact sequence of groups. If $G_1$ 
and $G_2$ are finitely presented, $Q$ is of type $F_3$, and 
at least one of the groups $N_1, N_2$  is finitely
generated, then the fibre product $P<G_1\times G_2$ is finitely presented.
\end{theorem}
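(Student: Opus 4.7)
The plan is to construct an explicit finite presentation of $P$ by lifting a finite presentation of $Q$ to both $G_1$ and $G_2$ and using the $F_3$ hypothesis to show that only finitely many ``identities among relators'' need to be accounted for. The asymmetric hypothesis enters because the first factor must be handled generator-by-generator, while the second factor need only be handled up to normal generation.

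First I would set up the presentations. Fix a finite presentation $\<Y \mid S\>$ for $Q$ in which the module of identities among relators (i.e.\ $\pi_2$ of the presentation $2$-complex) is finitely generated as a $\Z[Q]$-module; this is the homotopical content of $Q$ being of type $F_3$. For each $y\in Y$ choose lifts $y^{(1)}\in G_1$ and $y^{(2)}\in G_2$, and enlarge $Y^{(i)}$ to a finite generating set $Y^{(i)}\cup T_i$ of $G_i$, with $T_i\subset N_i$. Crucially, $T_1$ is chosen to generate $N_1$ as a group (possible because $N_1$ is finitely generated), whereas $T_2$ need only normally generate $N_2$ in $G_2$ (possible because $Q$ is finitely presented).

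Next I would write a candidate presentation of $P$ with generating set $\{(y^{(1)},y^{(2)}):y\in Y\}\cup (T_1\times\{1\})\cup (\{1\}\times T_2)$ and relations: finite presentations of $G_1$ and $G_2$ read in the respective coordinates; the commutators $[T_1\times\{1\},\ \{1\}\times T_2]=1$, which hold automatically in the ambient $G_1\times G_2$; and, for each relator $s\in S$, a relation encoding that the two lifts $w_s^{(1)}\in N_1$ and $w_s^{(2)}\in N_2$ of $s$ both map trivially to $Q$, expressed as words in $T_1$ and in the $P$-conjugates of $\{1\}\times T_2$, respectively.

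The hard step is closing the presentation, i.e.\ verifying that these finitely many relations suffice. By van Kampen's lemma, any extra relation in $P$ projects to an identity in the Cayley $2$-complex of $\<Y\mid S\>$; the $F_3$ hypothesis provides a finite $\Z[Q]$-module generating set for such identities, whose lifts produce the required finite list. The asymmetry surfaces precisely here: the first-coordinate contribution of a lifted identity is a word in $T_1$, using that $N_1$ is finitely generated as a group; the second-coordinate contribution need only be expressed as a consequence of $P$-conjugation applied to $T_2$, which is exactly what normal generation of $N_2$ in $G_2$ supplies. The main obstacle I anticipate is the bookkeeping required to make this rewriting uniformly finite and to verify that the classical symmetric $1$-$2$-$3$ argument truly goes through under the relaxed hypothesis on $N_2$; the appropriate framework is the relative presentation machinery developed in \cite{bhms}.
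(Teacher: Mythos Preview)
The paper does not prove this theorem: it is quoted verbatim from \cite{bhms} and used as a black box (note the citation in the theorem header and the absence of any proof environment following it). So there is nothing in the present paper to compare your attempt against.

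That said, your sketch is a reasonable outline of how the result is actually established in \cite{bhms}: one lifts a finite presentation of $Q$ to each $G_i$, uses the $F_3$ hypothesis to control the $\Z[Q]$-module of identities among relators, and builds a finite presentation for $P$ from generators of the form $(y^{(1)},y^{(2)})$, $(t_1,1)$, $(1,t_2)$ together with relations coming from the $G_i$, commutation, and lifts of the finitely many module generators of the identities. The asymmetry you describe---$N_1$ finitely generated as a group versus $N_2$ only normally finitely generated in $G_2$---is exactly the point of the asymmetric version. Your caveat at the end is well placed: the substantive work is the careful rewriting that shows these finitely many relations really do close up, and that is precisely what the machinery of \cite{bhms} provides. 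But for the purposes of the paper under review, no proof is expected here; the theorem is simply invoked.
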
 

\begin{corollary}\label{Pfp} Suppose $d\ge 2$ and
  let $1\to N_i\to G_i\overset{p_i}\to Q\to 1$ be a short exact sequence of groups, for $i=1,\dots,d$. If 
the groups $G_i$ are all finitely presented, the groups $N_i$ are finitely generated, and $Q$ is of type $F_3$,
 then the associated fibre product $P_d<G_1\times\dots\times G_d$ is finitely presented.
\end{corollary}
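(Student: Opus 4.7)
The plan is to induct on $d$, reducing each case to Theorem \ref{t:123}. The base case $d = 2$ is Theorem \ref{t:123} itself, so all of the content lies in the inductive step.

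For the inductive step, I would express $P_d$ as a 2-fold fibre product. Define $\pi: P_{d-1} \to Q$ by sending $(g_1, \ldots, g_{d-1})$ to the common value $p_i(g_i)$. This map is surjective with kernel $N_1 \times \cdots \times N_{d-1}$, which, being a product of finitely many finitely generated groups, is itself finitely generated. One therefore has a short exact sequence
$$1 \to N_1 \times \cdots \times N_{d-1} \to P_{d-1} \overset{\pi}{\to} Q \to 1.$$
Directly from the definitions, $P_d$ coincides with the fibre product of $\pi: P_{d-1} \to Q$ with $p_d: G_d \to Q$. By the inductive hypothesis $P_{d-1}$ is finitely presented; $G_d$ is finitely presented by assumption; $Q$ is of type $F_3$; and both kernels are finitely generated. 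Theorem \ref{t:123} therefore applies and yields that $P_d$ is finitely presented.

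There is essentially no serious obstacle here: the argument is formal once one identifies the correct inductive decomposition. The only things to verify are that $\pi$ is well-defined (immediate from the definition of $P_{d-1}$) and surjective (immediate from the surjectivity of any single $p_i$), and that the fibre product of $\pi$ and $p_d$ naturally identifies with $P_d$ as a subgroup of $G_1 \times \cdots \times G_d$, which follows from unpacking the definitions. The substance of the corollary is entirely encoded in the Asymmetric 1-2-3 Theorem; the induction is just a bookkeeping device that packages it into the $d$-fold setting.
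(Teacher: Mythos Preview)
Your proof is correct and follows essentially the same approach as the paper: both induct on $d$, identify $P_d$ as the fibre product of $P_{d-1}\to Q$ and $p_d:G_d\to Q$, and apply Theorem~\ref{t:123} using the inductive hypothesis that $P_{d-1}$ is finitely presented. Your map $\pi$ is exactly the paper's composition $P_{d-1}\to Q_{d-1}^\Delta\cong Q$, and you make explicit the (minor) observation that the kernel $N_1\times\cdots\times N_{d-1}$ is finitely generated, which the paper leaves implicit.
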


\begin{proof}
We proceed by induction on $d$; the case $d=2$ is covered by the theorem. 
Let $P_d < G_1\times\dots\times G_d$ be the fibre product of $p_1,\dots,p_d$. For the
inductive step, first note that  
$$P_d \ < \ P_{d-1}\times G_d \ < \ G_1\times\dots\times G_d$$ 
is the fibre product of the map $p_d:G_d\to Q$ and the composition $P_{d-1}\to Q_{d-1}^\Delta\to Q$,
where $P_{d-1}\to Q_{d-1}^\Delta$ is the map from (\ref{e1}) and $Q_{d-1}^\Delta\to Q$ is
the isomorphism $(q,\dots,q)\mapsto q$. To complete the proof, we apply the theorem,
noting that $P_{d-1}$ is finitely presented, by induction.
\end{proof}

We shall also need the following more elementary result.

\begin{lemma}\label{l:fg} For $i=1,\dots,d$, let $G_i\twoheadrightarrow Q$ be an epimorphism of groups.
If the groups $G_i$ are finitely
generated and $Q$ is finitely presented, then  the fibre product $P_d<G_1\times\dots\times G_d$ is finitely generated.
\end{lemma}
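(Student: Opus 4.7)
The plan is first to reduce to the case $d=2$ by induction, in analogy with the proof of Corollary~\ref{Pfp}: for $d \geq 3$, view $P_d$ as the fibre product over $Q$ of $p_d : G_d \to Q$ with the composition $P_{d-1} \to Q_{d-1}^\Delta \cong Q$; then $P_{d-1}$ is finitely generated by induction, and the $d=2$ statement does the rest.

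For $d=2$, I would produce an explicit finite generating set for $P \leq G_1 \times G_2$. The ingredients are a finite presentation $\<Y \mid R\>$ of $Q$, finite generating sets $X_i \subset G_i$, chosen lifts $\tilde y_i \in G_i$ of each $y \in Y$, and for each $x \in X_i$ a fixed $F(Y)$-word $u_x$ representing $p_i(x) \in Q$. From these I assemble three families of elements of $P$: the diagonal lifts $z_y := (\tilde y_1, \tilde y_2)$ for $y \in Y$; the correction terms $a_x := (x \cdot u_x(\tilde y_1)^{-1},\, 1)$ for $x \in X_1$ and $b_x := (1,\, x \cdot u_x(\tilde y_2)^{-1})$ for $x \in X_2$; and the relator terms $\rho_r := (r(\tilde y_1),\, 1)$ for $r \in R$. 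A direct check confirms each lies in $P$, and $S := \{z_y\} \cup \{a_x\} \cup \{b_x\} \cup \{\rho_r\}$ is finite.

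To show $\<S\> = P$ I would take an arbitrary $(g_1,g_2) \in P$ and whittle it down in two stages. The identity $a_x z_{u_x} = (x,\, u_x(\tilde y_2))$, iterated along an $X_1$-word expression of $g_1$, shows that $(g_1,\, u_{g_1}(\tilde y_2)) \in \<S\>$ for some $u_{g_1} \in F(Y)$ representing $p_1(g_1) = p_2(g_2)$ in $Q$. Multiplying $(g_1,g_2)$ by the inverse of this element reduces the problem to proving $(1,n) \in \<S\>$ for an arbitrary $n \in N_2 := \ker p_2$.

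The main obstacle is precisely this last case, and it is the one place where the finite presentability of $Q$ is essential. Writing $n$ as an $X_2^{\pm 1}$-word and using the dual identity $b_x z_{u_x} = (u_x(\tilde y_1),\, x)$, one obtains $(u_n(\tilde y_1),\, n) \in \<S\>$ with $u_n \in F(Y)$ representing $1 \in Q$, so $u_n$ lies in the normal closure of $R$ in $F(Y)$; writing $u_n = \prod_k v_k r_k^{\pm 1} v_k^{-1}$, the element $(u_n(\tilde y_1),\, 1)$ is then the explicit product $\prod_k z_{v_k} \rho_{r_k}^{\pm 1} z_{v_k}^{-1} \in \<S\>$, from which $(1,n) \in \<S\>$ follows. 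The reason the $\rho_r$ are needed --- and the conceptual point of the argument --- is that the kernels $N_i$ are typically not finitely generated, so after the first reduction the residual element of $N_1 \times \{1\}$ must be rebuilt from the finitely many relators of $Q$ using the conjugators $z_{v_k}$ supplied by the diagonal lifts.
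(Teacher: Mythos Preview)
Your proof is correct and follows essentially the same route as the paper: reduce to $d=2$ by the same induction, then exhibit a finite generating set built from diagonal lifts of the generators of $Q$, kernel elements on each side, and the relators of $Q$ pushed into one factor. The only cosmetic difference is that the paper chooses each generating set of $G_i$ to consist of lifts $a_{ij}$ of the generators $a_j$ of $Q$ together with finitely many elements $B_i\subset\ker p_i$, which absorbs your correction terms $a_x,b_x$ into the $B_i$ and lets the verification be left as ``easy to check''; your more detailed argument with arbitrary $X_i$ and explicit correction terms is the same computation unpacked.
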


\begin{proof}
As in the preceding proof, induction reduces us to the case $d=2$. We fix a finite presentation $Q=\<a_1,\dots,a_n
\mid r_1,\dots, r_m\>$ and for $i=1,2$ choose  $a_{ij}\in G_i$ such that $p_i(a_{ij})=a_j$. We then add a finite set of
elements $B_i\subset \ker p_i$ to obtain a finite generating set for $G_i$, 
and denote by $\rho_{ik}$ the word obtained from $r_k$ by replacing each $a_j$ with  $a_{ij}$.
It is easy to check that the fibre product $P<G_1\times G_2$ is 
generated by 
$$
\{ (b_{1s},1), \ (1,b_{2s}),\ (a_{1j}, a_{2j}),\ (\rho_{1k}, 1) \mid j=1,\dots, n; \ k=1,\dots, m;\  b_{is}\in B_i\}.
$$
\end{proof}

\subsection{Fibre products and Grothendieck pairs} 

The idea of constructing Grothendieck pairs using fibre products originates in the work of Platonov and Tavgen \cite{PT}
and was extended in  \cite{BL}, \cite{BG} and \cite{mb:jems}.  

The following result is Lemma 2.2 in \cite{BG}.

\begin{lemma}\label{l:PT} Let $1\to N\to G\to Q\to 1$ be an exact sequence of finitely generated groups.
If $\wh{Q}=1$ and $H_2(Q,\Z)=0$, then  $N\hookrightarrow G$ 
induces an isomorphism of profinite completions.
\end{lemma}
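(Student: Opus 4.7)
The plan is to prove surjectivity and injectivity of the induced map $\wh{N}\to\wh{G}$ separately. Surjectivity is essentially formal from $\wh{Q}=1$: for any finite quotient $\phi\colon G\twoheadrightarrow F$, the quotient $F/\phi(N)$ is a finite image of $Q$, hence trivial; so $\phi(N)=F$. This makes $N$ dense in $\wh{G}$, and combined with the compactness of $\wh{N}$ it forces $\wh{N}\to\wh{G}$ to be surjective.

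For injectivity, I would show that every finite-index normal subgroup $K\triangleleft N$ contains $L\cap N$ for some finite-index normal $L\triangleleft G$. Since $N$ is finitely generated it has only finitely many subgroups of each index, so the $G$-orbit of $K$ under conjugation is finite and its $G$-core $\bigcap_{g\in G}gKg^{-1}$ is still of finite index in $N$. Replacing $K$ by its core, I may assume $K\triangleleft G$.

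The two hypotheses now combine as follows. The conjugation action of $Q=G/N$ on the finite group $N/K$ factors through the finite group $\mathrm{Aut}(N/K)$, and since $\wh{Q}=1$ this action must be trivial. Hence $N/K$ is central in $E:=G/K$; in particular $N/K$ is abelian, and we obtain a central extension $1\to A\to E\to Q\to 1$ with $A:=N/K$. The 5-term exact sequence in integral homology (of the type already used in the proof of Proposition~\ref{p:rips}(5)) then becomes
$$H_2(Q,\Z)\to A\to E^{ab}\to Q^{ab}\to 0,$$
and the input $H_2(Q,\Z)=0$ forces an embedding $A\hookrightarrow E^{ab}$. Since $G$, and hence $E$, is finitely generated, $E^{ab}$ is a finitely generated abelian group and thus residually finite. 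Choosing a finite quotient $E^{ab}\twoheadrightarrow F$ injective on the finite subgroup $A$ and pulling back along $G\to E\to E^{ab}$ yields a finite-index normal subgroup $L\triangleleft G$ with $L\cap N=K$, as required.

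The main obstacle is engineering the central extension structure: one must invoke $\wh{Q}=1$ first to trivialise the conjugation action, and only then is one in a position to exploit $H_2(Q,\Z)=0$ via the 5-term sequence. Once the injection $A\hookrightarrow E^{ab}$ is in hand, residual finiteness of finitely generated abelian groups finishes the argument with no further work.
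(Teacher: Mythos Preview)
Your argument is correct. The paper does not supply its own proof of this lemma; it simply cites \cite[Lemma~2.2]{BG}. What you have written is precisely the standard Platonov--Tavgen style argument underlying that reference: density of $N$ in $\wh{G}$ from $\wh{Q}=1$, reduction to $K\triangleleft G$ via the finiteness of the $G$-orbit of $K$, centrality of $N/K$ in $G/K$ again from $\wh{Q}=1$, and then the 5-term sequence together with $H_2(Q,\Z)=0$ to embed $N/K$ in the abelianisation of $G/K$. So your proposal is both correct and faithful to the intended (cited) proof, with the added virtue of being self-contained.
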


The following variant of the Platonov-Tavgen argument will be useful; this is Theorem 2.2 of \cite{mb:jems}.

\begin{proposition} \label{p:PT} Let $p_1: G_1\to Q$ and $p_1: G_2\to Q$ be epimorphisms with $G_1$ and $G_2$
finitely generated and $Q$  finitely presented. Let $P<G_1\times G_2$
be the associated fibre product. If $\wh{Q}=1$  
and $H_2(Q,\Z)=0$,  then 
$P\hookrightarrow G_1\times G_2$ induces an isomorphism of profinite completions.
\end{proposition}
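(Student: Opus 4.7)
The plan is to factor the inclusion $P\hookrightarrow G_1\times G_2$ through the common normal subgroup
$M:=N_1\times N_2=\ker\bigl((p_1,p_2):G_1\times G_2\to Q\times Q\bigr)$,
which is contained in $P$ because both $N_1\times\{1\}$ and $\{1\}\times N_2$ lie in $P$. One then has two short exact sequences,
\[
1\to M\to P\to Q\to 1 \quad\text{and}\quad 1\to M\to G_1\times G_2\to Q\times Q\to 1,
\]
the first being the fibre-product sequence (\ref{e1}) with $d=2$; together with the inclusion $P\hookrightarrow G_1\times G_2$ they form an obvious commutative diagram. If the inclusions of $M$ into $P$ and into $G_1\times G_2$ both induce isomorphisms of profinite completions, then by naturality the induced map $\wh P\to\widehat{G_1\times G_2}$ coincides with the composite $\wh P\xleftarrow{\sim}\wh M\xrightarrow{\sim}\widehat{G_1\times G_2}$, and hence is itself an isomorphism.

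To apply Lemma~\ref{l:PT} to the first sequence, the hypotheses $\wh Q=1$ and $H_2(Q,\Z)=0$ are given, and $P$ is finitely generated by Lemma~\ref{l:fg}. For the second, $\widehat{Q\times Q}=\wh Q\times\wh Q=1$ immediately; moreover, $H_1(Q,\Z)=Q^{\mathrm{ab}}$ is a finitely generated abelian group with no non-trivial finite quotient (a consequence of $\wh Q=1$), hence trivial, so the K\"unneth formula gives
\[
H_2(Q\times Q,\Z) \,=\, H_2(Q,\Z)\oplus\bigl(H_1(Q,\Z)\otimes H_1(Q,\Z)\bigr)\oplus H_2(Q,\Z) \,=\, 0.
\]
Both $G_1\times G_2$ (finitely generated) and $Q\times Q$ (finitely presented) satisfy the required finiteness conditions on the total and quotient groups.

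The main obstacle is that Lemma~\ref{l:PT} is formally stated for exact sequences in which all three groups are finitely generated, whereas the hypotheses of Proposition~\ref{p:PT} do not guarantee that the kernels $N_1,N_2$, and hence $M$, are finitely generated. This is handled by observing that the actual Platonov--Tavgen argument (take the $G$-core of a finite-index normal subgroup of the kernel to make it normal in the ambient group; use $\wh Q=1$ together with the finiteness of $\mathrm{Out}$ of the resulting finite normal subgroup to trivialise the outer action and reduce to a central extension of $Q$; then split this central extension using $H^2(Q,A)=0$ for every finite abelian $A$, which follows from $H_1(Q,\Z)=0=H_2(Q,\Z)$) never uses that the kernel is finitely generated. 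Either one checks that Lemma~\ref{l:PT} extends verbatim to this more general setting, or one runs the same argument directly for each of the two displayed exact sequences; in either case, composing the two resulting isomorphisms yields the required isomorphism $\wh P\xrightarrow{\sim}\widehat{G_1\times G_2}$.
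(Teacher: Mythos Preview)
The paper does not prove this proposition; it simply cites \cite{mb:jems}, Theorem~2.2. So there is no ``paper's proof'' to compare against, but your argument must still be checked on its own merits, and it has a genuine gap.

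Your reduction factors the inclusion through $M=N_1\times N_2$ and asks that both $M\hookrightarrow P$ and $M\hookrightarrow G_1\times G_2$ induce profinite isomorphisms. You correctly note that Lemma~\ref{l:PT}, as stated, requires the kernel to be finitely generated, and you try to remove this hypothesis by asserting that the Platonov--Tavgen argument never uses it. That assertion is false. The step ``take the $G$-core of a finite-index normal subgroup $H\lhd N$'' need not produce a finite-index subgroup: the $G$-conjugates of $H$ are indexed by $G/N_G(H)$, which is an infinite coset space of $Q$ in general, and the intersection can drop to infinite index.

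Here is an explicit failure. Let $Q$ be as in Section~\ref{s:input} and set $G=Q\ltimes \mathbb{F}_2[Q]$, with $N=\mathbb{F}_2[Q]$ and $p:G\to Q$ the projection. Then $G$ is finitely generated, $\wh{Q}=1$, $H_2(Q,\Z)=0$, but $\wh{N}\to\wh{G}$ is \emph{not} injective: since every finite $Q$-module has trivial $Q$-action (as $\wh{Q}=1$), every $G$-normal finite-index subgroup of $N$ contains the augmentation ideal $I$, so the topology on $N$ induced from $G$ sees only $N/I\cong\mathbb{F}_2$, whereas $\wh{N}$ is enormous. Taking $G_1=G_2=G$ and $p_1=p_2=p$, one computes $\wh P\cong\mathbb{F}_2^{\,2}\cong\wh{G\times G}$, so Proposition~\ref{p:PT} does hold here; but $\wh{M}\to\wh{P}$ and $\wh{M}\to\wh{G\times G}$ are not isomorphisms, so your factorisation strategy collapses.

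What rescues the situation (and is presumably what is done in \cite{mb:jems}) is to argue directly with $P\hookrightarrow G_1\times G_2$ rather than via $M$. The key extra leverage is that $\pi_i:P\twoheadrightarrow G_i$ is surjective: if $H\lhd P$ has finite index, then $H\cap(N_i\times 1)$ is already \emph{normal in $G_i$} (not merely in $N_i$), because conjugation by $P$ on $N_i\times 1$ realises the full $G_i$-action. This sidesteps the $G$-core problem entirely, and the rest of the splitting argument you sketched then goes through. Your write-up would be repaired by running that direct argument instead of routing through $M$.
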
  

We need an extension to the case of $d\ge 2$ factors.

\begin{theorem}\label{t:fib}
For $i=1,\dots,d$, let  $p_i:G_i\to Q$ be an epimorphism of finitely generated groups, and  let $P_d<
\G:=G_1\times\dots\times G_d$
be the associated fibre product.
If $Q$  is finitely presented, $\wh{Q}=1$  
and $H_2(Q,\Z)=0$,  then 
$P_d\hookrightarrow \G$ induces an isomorphism of profinite completions.
\end{theorem}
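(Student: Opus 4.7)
The plan is to proceed by induction on $d$, with the base case $d=2$ being precisely Proposition \ref{p:PT}. The key observation that drives the induction is exactly the one already exploited in the proof of Corollary \ref{Pfp}: the fibre product $P_d$ of the family $p_1,\dots,p_d$ can be realised as a two-factor fibre product, namely the fibre product inside $P_{d-1}\times G_d$ of the map $p_d:G_d\to Q$ and the composition $\pi:P_{d-1}\to Q_{d-1}^{\Delta}\to Q$ coming from the sequence (\ref{e1}) (followed by the canonical isomorphism $(q,\dots,q)\mapsto q$).

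To apply Proposition \ref{p:PT} in the inductive step I need both maps $\pi$ and $p_d$ to be epimorphisms onto $Q$ and both domains to be finitely generated. The map $\pi$ is surjective because any $q\in Q$ can be lifted to some $g_1\in G_1$ via $p_1$ and then, using surjectivity of each $p_i$, to elements $g_i\in G_i$ with $p_i(g_i)=q$, yielding $(g_1,\dots,g_{d-1})\in P_{d-1}$ with $\pi(g_1,\dots,g_{d-1})=q$. The group $P_{d-1}$ is finitely generated by Lemma \ref{l:fg}, since $Q$ is finitely presented and each $G_i$ is finitely generated. The remaining hypotheses $\wh{Q}=1$ and $H_2(Q,\Z)=0$ are given, so Proposition \ref{p:PT} applies and the inclusion $P_d\hookrightarrow P_{d-1}\times G_d$ induces an isomorphism of profinite completions.

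Next I invoke the inductive hypothesis on $P_{d-1}\hookrightarrow G_1\times\dots\times G_{d-1}$, which also induces an isomorphism of profinite completions. Because profinite completion commutes with finite direct products, taking the product with the identity on $G_d$ shows that
\[
P_{d-1}\times G_d\ \hookrightarrow\ G_1\times\dots\times G_{d-1}\times G_d = \Gamma
\]
induces an isomorphism of profinite completions as well. Composing the two inclusions yields the desired conclusion for $P_d\hookrightarrow\Gamma$.

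There is no real obstacle beyond bookkeeping: the only substantive content is already packaged in Proposition \ref{p:PT} and Lemma \ref{l:fg}, and the inductive reorganisation of $P_d$ as a two-factor fibre product is exactly the trick used earlier in the paper. The mildest point to verify carefully is that the composed isomorphism of profinite completions is indeed induced by the inclusion $P_d\hookrightarrow\Gamma$; this follows at once from the fact that the inclusion factors as $P_d\hookrightarrow P_{d-1}\times G_d\hookrightarrow \Gamma$ at the level of discrete groups, so functoriality of $\wh{(-)}$ does the rest.
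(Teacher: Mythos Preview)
Your proof is correct and follows essentially the same approach as the paper: induction on $d$ with base case Proposition~\ref{p:PT}, realising $P_d$ as the two-factor fibre product inside $P_{d-1}\times G_d$ (as in Corollary~\ref{Pfp}), invoking Lemma~\ref{l:fg} to ensure $P_{d-1}$ is finitely generated, and then applying Proposition~\ref{p:PT}. You simply spell out more explicitly the step where the inductive hypothesis, combined with the fact that profinite completion commutes with finite direct products, gives the isomorphism for $P_{d-1}\times G_d\hookrightarrow\Gamma$, which the paper leaves implicit.
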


\begin{proof}
We argue by induction on $d$, as in the proof of Theorem \ref{t:123}. In the inductive step, we appeal 
to Lemma \ref{l:fg} to ensure that $P_{d-1}$ is finitely generated. We can then apply Proposition \ref{p:PT}
to $p_d:G_d\to Q$ and $P_{d-1}\to Q_{d-1}^\Delta\cong Q$, noting that $P_d$ is the fibre product of these maps.
\end{proof}

\subsection{Fibre products and wreath products}\label{s:fibW}

Given groups $A$ and $B$, with $B$ finite, the  {\em wreath product} $A\wr B$ is the semidirect product $A^B\rtimes B$, or
more precisely
$(\oplus_{b\in B} A_b)\rtimes B$, with fixed isomorphisms $\mu_b:A\to A_b$ so that $b\in B$ acts on $A_{b'}$
as $\mu_{bb'}\circ\mu_{b'}^{-1}$.   
We identify $A^\Delta < A\times\dots\times A$ with its image  under $(\mu_b)_{b\in B}$. The following
trivial observation will play an important role in what follows.

\begin{lemma}\label{l:diag}
$\<A^\Delta, B\> < A\wr B$ is the direct product $A^\Delta \times B\cong A\times B$.
\end{lemma}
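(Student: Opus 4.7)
The plan is to verify the three standard ingredients that promote a product of two subgroups in an ambient group to an internal direct product: that $A^{\Delta}\cong A$, that $A^{\Delta}\cap B=\{1\}$, and that $B$ centralises $A^{\Delta}$. Only the last point requires a computation; the other two are automatic from the description of $A\wr B$ as the semidirect product $(\oplus_{b\in B}A_b)\rtimes B$ together with the definition of $A^{\Delta}$.

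First I note that the map $A\to A^{B}$ sending $a\mapsto (\mu_{b}(a))_{b\in B}$ is injective, since each $\mu_{b}$ is an isomorphism, so it identifies $A$ with $A^{\Delta}$. Because $A^{\Delta}$ lies in the normal subgroup $\oplus_{b}A_{b}$ of $A\wr B$, and $B$ meets this normal subgroup trivially in any semidirect product, the intersection $A^{\Delta}\cap B$ is trivial.

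The main (and only non-formal) step is that each $b\in B$ commutes with each element of $A^{\Delta}$. Fix $a_{0}\in A$ and the corresponding diagonal element $a=(\mu_{b'}(a_{0}))_{b'\in B}\in A^{\Delta}$. The element $b a b^{-1}\in \oplus_{b'}A_{b'}$ is, by definition of the wreath action, the tuple obtained by applying $\mu_{b b'}\circ\mu_{b'}^{-1}$ to the $b'$-component of $a$ and placing the result in position $bb'$. The $b'$-component of $a$ is $\mu_{b'}(a_{0})$, so its image is $\mu_{bb'}(a_{0})\in A_{bb'}$. Reindexing by $b'':=bb'$, the tuple $b a b^{-1}$ has $b''$-component $\mu_{b''}(a_{0})$ for every $b''\in B$; this is exactly $a$. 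Hence $bab^{-1}=a$.

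Combining these three facts, $\langle A^{\Delta},B\rangle$ is a subgroup in which $A^{\Delta}$ and $B$ intersect trivially and commute elementwise, so it equals their internal direct product $A^{\Delta}\times B$, which via $a\mapsto(\mu_{b}(a))_{b}$ is isomorphic to $A\times B$. I expect no obstacle beyond keeping the indexing of the wreath action straight; the reindexing $b''=bb'$ is the only place where one might slip.
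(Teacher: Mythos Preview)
Your proof is correct; the only thing to note is that the paper does not actually prove this lemma at all---it introduces it as a ``trivial observation'' and moves on. Your argument supplies exactly the details one would write down if asked: the diagonal copy of $A$ sits inside the base group, meets $B$ trivially, and is fixed pointwise by the permutation action of $B$, the last point being the short reindexing computation you carry out using the paper's description of the $B$-action on $A_{b'}$ as $\mu_{bb'}\circ\mu_{b'}^{-1}$.
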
 

Given $B$ and a short exact sequence of groups $1\to N\to G\to Q\to 1$, we take the
direct product of $|B|$ copies of the sequence,
 indexed by the elements of $B$, and let $B$ permute these copies by its left action on the indices.
The resulting semidirect products give us a (non-exact) sequence of groups
$$
N\wr B \hookrightarrow G\wr B \twoheadrightarrow Q\wr B.
$$  
The action of $B$ preserves the fibre product $P_B< G^B=\oplus_{b\in B} G_b$ of the maps $G_b\to Q_b$,
giving a semidirect product 
$$P_B\rtimes B = \< P_B, B\> < G\wr B$$
and a (non-exact) sequence of groups
$$
N\wr B \hookrightarrow P_B\rtimes B  \twoheadrightarrow \<Q^\Delta, B\> < Q\wr B.
$$  
From Lemma \ref{l:diag} we deduce:
\begin{lemma}\label{l:ontoQ} With the notation established above, there is 
surjection $$P_B\rtimes B\twoheadrightarrow Q^\Delta \cong Q.$$
\end{lemma}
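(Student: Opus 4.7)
The plan is to obtain the required surjection as a composition of two natural maps: first, the quotient map $G \wr B \twoheadrightarrow Q \wr B$ restricted to $P_B \rtimes B$, and second, a projection onto the $Q^\Delta$ factor of a direct-product decomposition supplied by Lemma \ref{l:diag}.

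First I would observe that the coordinate-wise map $G^B \twoheadrightarrow Q^B$ is $B$-equivariant, so it extends to a surjective homomorphism $\pi: G \wr B \twoheadrightarrow Q \wr B$ that is the identity on the $B$ factor. Next, the image of $P_B$ under $\pi$ sits inside $Q^\Delta$: by the very definition of the fibre product, any $(g_b)_{b \in B} \in P_B$ has all its coordinates mapping to the same element of $Q$. Conversely, the map $P_B \twoheadrightarrow Q^\Delta$ is surjective, because any $(q, \dots, q) \in Q^\Delta$ can be lifted coordinate-wise using surjectivity of $G \to Q$ on each factor, and the resulting tuple lies in $P_B$. Consequently, $\pi$ restricts to a surjection $P_B \rtimes B \twoheadrightarrow \langle Q^\Delta, B \rangle < Q \wr B$.

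By Lemma \ref{l:diag}, $\langle Q^\Delta, B \rangle$ is the internal direct product $Q^\Delta \times B$. Composing $\pi|_{P_B \rtimes B}$ with the projection $Q^\Delta \times B \twoheadrightarrow Q^\Delta$ onto the first factor and with the identification $Q^\Delta \cong Q$ produces the claimed surjection $P_B \rtimes B \twoheadrightarrow Q$.

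There is no real obstacle here: the only thing to verify is that $\pi$ indeed carries $P_B \rtimes B$ onto $\langle Q^\Delta, B \rangle$ rather than a smaller subgroup, which is immediate from surjectivity of $G_b \twoheadrightarrow Q_b$ for each $b$ together with the fact that $\pi$ is the identity on $B$. The decisive input is Lemma \ref{l:diag}: without the commutation between $Q^\Delta$ and $B$ inside $Q \wr B$, the second projection would not exist, and one would only recover a surjection onto a semidirect product.
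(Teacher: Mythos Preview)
Your argument is correct and follows exactly the route the paper takes: the paper deduces the lemma directly from Lemma~\ref{l:diag} applied to the image $\langle Q^\Delta, B\rangle$ of $P_B\rtimes B$ in $Q\wr B$, then projects off the $B$ factor. You have simply unpacked this deduction in more detail.
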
 
 
\section{Fixed point criteria}

In this section we present criteria that guarantee fixed points for group actions on complete \cat spaces of finite
dimension. These criteria are extracted from the more general criteria explained in \cite{mb:helly}
and \cite{mb:mcg}. 

The following result is a special case of \cite[Corollary 3.6]{mb:mcg}.

\begin{proposition}
Let $d$ be a positive integer and let $X$  be a complete $\cat$ space of dimension less than $d$. 
Let $S_1,\dots, S_d\subset{\rm{Isom}}(X)$  be conjugates of a subset $S\subset{\rm{Isom}}(X)$ with $[s_i,s_j]=1$ for all $s_i\in S_i$ and $s_j\in S_j$.
If every element of $S$ (hence $S_i$) has a fixed point in $X$, then so does every finite subset of $S$ (hence $S_i$).
\end{proposition}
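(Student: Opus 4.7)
The plan is to argue by induction on $n$, the cardinality of the finite subset of $S$ under consideration. The base case $n=1$ is the hypothesis. After relabelling, one of the $S_i$, say $S_1$, may be taken equal to $S$. For the inductive step, given $T = \{t_1,\ldots,t_n\} \subset S$, set $t := t_n$ and $F := \fix(T\setminus\{t\})$. By the inductive hypothesis $F$ is a non-empty closed convex subspace of $X$, and the objective is to show $\fix(t) \cap F \neq \emptyset$.

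The commuting-conjugates hypothesis enters as follows. For each $i \in \{2,\ldots,d\}$, the element $t^{(i)} := g_i t g_i^{-1} \in S_i$ commutes with every element of $S_1 = S$, and in particular with every element of $T$. Hence $t^{(i)}$ preserves $F$, and since $t^{(i)}$ is elliptic (being conjugate to the elliptic $t$) the circumcenter of any orbit of $\langle t^{(i)}\rangle$ in $F$ lies in $F$ and is fixed; thus $t^{(i)}$ has a fixed point in $F$. Moreover $t, t^{(2)}, \ldots, t^{(d)}$ pairwise commute and are all elliptic, so by Proposition~\ref{c2.5} the subspace $Y := \bigcap_{i=2}^{d}\fix(t^{(i)})$ is non-empty, closed, and convex; applying Proposition~\ref{c2.5} again to the commuting subgroups $\langle T\setminus\{t\}\rangle$ and $\langle t^{(2)},\ldots,t^{(d)}\rangle$ (each with non-empty fix set) yields $F \cap Y \neq \emptyset$.

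To finish I would iterate this construction, exploiting the dimension bound at each step: one replaces $t$ by another element $t_k$, uses its $d-1$ commuting conjugates to produce a further invariant closed convex subspace contained in $F \cap Y$, and passes to it. The crucial point is that the $d-1$ pairwise-commuting nontrivial elliptics $t^{(2)},\ldots,t^{(d)}$ strictly cut down the ambient dimension: using the splitting $\Min(\gamma) = Y \times \R$ of Proposition~\ref{p:min} applied to any hyperbolic that would obstruct a common fixed point on $F \cap Y$, together with the pairwise commutation of its conjugates under $g_2,\ldots,g_d$, one verifies that the effective dimension drops at each iteration. Since $\dim X < d$, the process terminates in finitely many steps at a zero-dimensional invariant subspace on which $T$ trivially has a common fixed point.

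The main obstacle is precisely this dimension drop: fixed sets of elliptic isometries in CAT(0) spaces need not have strictly smaller dimension than the ambient space, so one cannot simply restrict naively. The analysis must be performed via Min sets (Proposition~\ref{p:min}) and a careful case-split on how the commuting conjugates act on the $\R$-factors of iterated Min splittings. Equivalently, one may argue by contradiction: if no dimension drop occurs at some stage, then $t$ together with its $d-1$ commuting conjugates, composed with a suitable hyperbolic element of $\langle T\rangle$, generate a $\Z^d$-subgroup acting by translations on a $d$-dimensional flat in $X$, contradicting $\dim X < d$. Organising this branching analysis so that each branch either produces a common fixed point or violates the dimension bound is the technical heart of the proof.
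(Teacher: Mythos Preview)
The paper does not prove this proposition; it is quoted without proof as a special case of \cite[Corollary~3.6]{mb:mcg}. So there is no in-paper argument to compare your attempt against, and the proposal must be judged on its own.

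The inductive setup and the opening deductions are sound. With $T,\,t,\,F,\,t^{(i)}$ as you define them, each $t^{(i)}$ is elliptic, commutes with every element of $S=S_1$ (hence with $T$), preserves $F$, and therefore fixes a point of $F$ (project any fixed point of $t^{(i)}$ onto the closed convex set $F$, or use your circumcentre argument after noting that orbits of an elliptic are bounded). Proposition~\ref{c2.5} then yields $F\cap Y\neq\emptyset$ with $Y=\bigcap_{i\ge 2}\fix(t^{(i)})$. From this point on, however, the proposal is a plan rather than a proof, and the plan has real problems.

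The mechanism you propose for exploiting $\dim X<d$ is to locate a hyperbolic $\gamma\in\langle T\rangle$, pass to its $d-1$ commuting conjugates $\gamma_i\in\langle T_i\rangle$, and iterate the splitting of Proposition~\ref{p:min} to manufacture a $d$-flat. Two things go wrong. First, you never produce such a $\gamma$: the hypothesis says only that the \emph{generators} $t_k$ are elliptic, and you give no argument that $\langle T\rangle$ must contain a hyperbolic element when it has no global fixed point. This is not automatic in general complete \cat spaces, and establishing it (if it is even true in the finite-dimensional setting) would itself be a substantial lemma that you have not supplied. Second, even granting hyperbolics $\gamma_i$, the iteration of Proposition~\ref{p:min} that builds an $\R^d$ requires each successive $\gamma_i$ to act trivially on the accumulated $\R$-factors; this is exactly the role of the hypothesis $\mathrm{Hom}(K_i,\R)=0$ in the lemma preceding Theorem~\ref{t:fix}, and nothing in your setup provides an analogue for $\langle T_i\rangle$. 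Since the $\gamma_i$ are conjugate, their translation directions could coincide rather than span, collapsing the would-be flat. Your final paragraph concedes that organising this case analysis is ``the technical heart of the proof''; as written, that heart is missing.
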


\begin{corollary}\label{c:ell}
Let $d$ be a positive integer, let $X$  be a complete $\cat$ space of dimension less than $d$,
and let  $H_1,\dots, H_d <{\rm{Isom}}(X)$  be subgroups that are pairwise conjugate.  
If each $H_i$ is generated by a finite set of elliptic elements, then $D=\<H_1,\dots,H_d\>$ has a fixed point in $X$.
\end{corollary}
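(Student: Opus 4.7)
The plan is to pick an elliptic finite generating set of $H_1$, spread it by conjugation to obtain finite elliptic generating sets of every $H_i$, and then invoke the preceding Proposition to produce a single point of $X$ fixed by the entire collection.

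Concretely, fix a finite generating set $T_1 = \{t_1,\dots, t_k\}$ of $H_1$ by elliptic elements, as provided by the hypothesis. For each $i=2,\dots,d$, choose $g_i\in\isom(X)$ with $g_i H_1 g_i^{-1}=H_i$ and set $T_i := g_i T_1 g_i^{-1}$. Each $T_i$ is a finite generating set of $H_i$, and every element of $T_i$ is elliptic because $\Min(g_i t g_i^{-1}) = g_i\cdot\Min(t)$ is non-empty whenever $\Min(t)$ is.

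Next, I would apply the preceding Proposition with $S := T_1$ and the $d$ conjugates $S_i := T_i$. The Proposition requires the pointwise commutation $[s_i,s_j]=1$ for $s_i\in T_i$, $s_j\in T_j$, $i\ne j$; this is the only delicate point, and it is furnished by the structural setup in which the corollary is to be deployed — in the proof of Theorem \ref{t:thmB} the $H_i$ will arise as the distinct direct-factor copies of $A$ inside $A^B < A\wr B$, and hence commute pointwise. Since each element of $T_1$ is elliptic and thus fixes a point, the Proposition yields a point $x\in X$ fixed by the finite set $T_1\cup\cdots\cup T_d$.

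Because $T_i$ generates $H_i$, such an $x$ lies in $\fix(H_i)$ for every $i$, and therefore in $\fix(D)$. The main obstacle is simply to read off the commutativity hypothesis of the earlier Proposition in the context where the corollary is invoked; the dimension restriction $\dim X < d$ enters only through that Proposition, and no further geometric input is needed once the commuting conjugate generating sets $T_i$ have been assembled.
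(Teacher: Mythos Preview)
Your overall strategy matches the paper's: pick a finite elliptic generating set $T_1$ for $H_1$, conjugate it to generating sets $T_i$ for the $H_i$, and invoke the preceding Proposition. You also correctly note that the commutation hypothesis $[s_i,s_j]=1$ is not part of the corollary's stated hypotheses but is supplied by the context in which the corollary is used (the $H_i$ being the direct-factor copies of $A$ in $A^B$); the paper's own proof leans on this tacitly as well.

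The gap is that you over-read the conclusion of the preceding Proposition. It asserts that every finite subset of $S$ --- and hence, by conjugation, every finite subset of each individual $S_i$ --- has a fixed point. It does \emph{not} produce a single point fixed by the union $T_1\cup\cdots\cup T_d$: applying it with $S=T_1$ only yields $\Fix(T_i)=\Fix(H_i)\neq\emptyset$ for each $i$ separately, and a priori these fixed-point sets could be disjoint. The paper fills this gap with a second use of commutation: since the $H_i$ commute and each $\Fix(H_i)$ is non-empty, Proposition~\ref{c2.5} gives $\bigcap_i\Fix(H_i)=\Fix(D)\neq\emptyset$. Your argument is missing precisely this step.
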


\begin{proof}
Let $S=S_1$ be a finite set of elliptics generating $H_1$. We conjugate $S$ to obtain a generating set $S_i$ for 
each $H_i$. The proposition says that $\Fix(S_i)=\Fix(H_i)$
is non-empty, whence  $\Fix(D)$ is non-empty, by Lemma \ref{c2.5}.
\end{proof}

For $n\in\N$, an {\em $n$-flat} in a metric space
 $X$ is an isometrically embedded copy of  Euclidean space $\mathbb{E}^n\hookrightarrow X$.

\begin{lemma}
 If $K_0,\dots,K_d$ are groups  with ${\rm{Hom}}(K_i,\R)=0$ and $X$ is a 
complete \cat space that does not contain any $(d+1)$-flats, then there does not exist an action
$\rho: K_0\times\dots\times K_d \to {\rm{Isom}}(X)$ such that each $\rho(K_i)$ contains a hyperbolic isometry.
\end{lemma}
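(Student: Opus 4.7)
The plan is to argue by induction on $d$. The base case $d=0$ is immediate: the hypothesis that $X$ contains no $1$-flat forbids the existence of any hyperbolic isometry of $X$, since every hyperbolic element has an axis, which is by definition an isometrically embedded copy of $\R$.

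For the inductive step, I would pick $\gamma_0 \in K_0$ with $\rho(\gamma_0)$ hyperbolic and apply Proposition~\ref{p:min} to produce a splitting $\Min(\rho(\gamma_0)) = Y_0 \times \R$, where $Y_0$ is a closed convex (hence complete \cat) subspace of $X$. Because the subgroup $\rho(K_1 \times \cdots \times K_d)$ centralises $\rho(\gamma_0)$, it preserves this splitting and acts on the $\R$-factor by translations, by Proposition~\ref{p:min}(3). The resulting homomorphism $K_1 \times \cdots \times K_d \to \R$ restricts to the zero map on each $K_i$ by hypothesis, so the action descends to a homomorphism $\rho' : K_1 \times \cdots \times K_d \to \isom(Y_0)$, with each $\rho(\gamma)$ acting on $Y_0 \times \R$ as $\rho'(\gamma) \times \mathrm{id}$.

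To close the induction, I would verify that $\rho'$ on $Y_0$ satisfies the hypotheses of the statement for $d-1$. The flat condition is easy: a $d$-flat inside $Y_0$ would combine with the $\R$-factor to produce a $(d+1)$-flat inside $Y_0 \times \R \subset X$, contradicting the hypothesis. The subtler point is that each $\rho'(K_i)$ must still contain a hyperbolic element. For this I would choose a hyperbolic $\gamma_i \in K_i$, invoke Proposition~\ref{p:min}(4) to locate an axis of $\rho(\gamma_i)$ inside $Y_0 \times \R$, and observe that since $\rho(\gamma_i)$ acts trivially on the $\R$-factor, this axis projects to a line in $Y_0$ on which $\rho'(\gamma_i)$ translates by the positive number $|\rho(\gamma_i)|$, making $\rho'(\gamma_i)$ hyperbolic on $Y_0$. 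Applying the inductive hypothesis to $\rho'$ then yields the desired contradiction.

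The main obstacle, and the reason all four parts of Proposition~\ref{p:min} are needed, is this last verification: ensuring that hyperbolicity is preserved when one passes from the action on $X$ to the induced action on the smaller \cat space $Y_0$. The triviality of $\Hom(K_i,\R)$ is exactly what is required to kill the translation component along $\R$, so that the hyperbolic motion of $\rho(\gamma_i)$ survives intact in the $Y_0$-direction.
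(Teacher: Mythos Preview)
Your proof is correct and follows essentially the same route as the paper: induction on $d$, with the inductive step passing to the convex factor $Y_0$ of $\Min(\rho(\gamma_0))=Y_0\times\R$ via Proposition~\ref{p:min}, using ${\rm Hom}(K_i,\R)=0$ to kill the translation component and part~(4) to retain hyperbolicity of the remaining $\gamma_i$ on $Y_0$. If anything, your treatment of why $\rho'(\gamma_i)$ stays hyperbolic on $Y_0$ is slightly more explicit than the paper's, which simply cites part~(4) at that point.
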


\begin{proof}
We shall prove the lemma by induction, the case $d=0$ being trivial. Assume that the lemma is true for $d'\le d-1$.
The induction will be complete if we can derive a contradiction from the assumption 
that there are hyperbolic isometries $\gamma_i\in\rho(K_i)$ for $i=0,\dots,d$. If this were the case,
then, according to Proposition \ref{p:min}, the subspace $\Min (\gamma_0)$ would split isometrically  as 
$Y\times \R$ and  the centraliser $C(\gamma_0)$ of $\gamma_0$ in $\isom(X)$ would
 preserve $\Min (\gamma_0)$ and its splitting, acting by translations on the second
factor of $Y\times\R$. The group of translations is $\mathbb{R}$ and ${\rm{Hom}}(K_i,\R)=0$,
so  $K_1\times\dots\times K_d$ must act trivially on the second factor. Thus we obtain an action of
$K_1\times\dots\times K_d$ on $Y_0=Y\times\{0\}$. Part (1) of Proposition \ref{p:min} assures us that
$Y_0\subset X$ is closed and convex, hence a \cat space, and part (4) tells us that   $\gamma_i\in K_i$
acts as a hyperbolic isometry of $Y_0$,  for $i=1,\dots,d$. 
But $Y\times \R = \Min (\gamma_0)$ embeds isometrically in $X$, so $Y_0$ does not contain a $d$-flat.
This contradicts our inductive hypothesis.
\end{proof}

\begin{theorem} \label{t:fix}
Let $G$ be a group and suppose that there is a 
subgroup $D=H_0\times\dots\times H_d < G$ with $H_i$  conjugate to $H_0$ in $G$ for $i=1,\dots,d$. If $H_0$
is finitely generated and has finite abelianisation, then $D$ has a fixed point whenever $G$ acts by 
semisimple isometries on a complete \cat space of dimension at most $d$.
\end{theorem}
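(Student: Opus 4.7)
The plan is to combine the two preceding results: use the no-$(d+1)$-flats lemma to force every element of each $H_i$ to act as an elliptic isometry, and then invoke Corollary \ref{c:ell} to produce a global fixed point for $D$.

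To set up, because each $H_i$ is conjugate to $H_0$ inside $G$, each $H_i$ is itself finitely generated with finite abelianisation; in particular ${\rm{Hom}}(H_i, \R) = 0$ for every $i = 0, \dots, d$. The given action of $G$ on the complete \cat space $X$ of dimension at most $d$ restricts to a semisimple action of $D = H_0 \times \dots \times H_d$, so every non-trivial element of each $H_i$ is either elliptic or hyperbolic.

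I would next argue by contradiction that no element of $H_0$ acts hyperbolically. Suppose $h \in H_0$ were hyperbolic, and for each $i$ pick $g_i \in G$ with $g_i H_0 g_i^{-1} = H_i$; because conjugation by an isometry preserves translation length and carries $\Min(h)$ onto $\Min(g_i h g_i^{-1})$, the element $g_i h g_i^{-1} \in H_i$ is again hyperbolic. Thus every $H_i$ would contain a hyperbolic isometry, and applying the preceding lemma with $K_i = H_i$ to the restricted action $D \to \isom(X)$ gives a contradiction, since $X$ has dimension at most $d$ and hence contains no $(d+1)$-flats.

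Consequently every element of $H_0$, and by conjugation every element of each $H_i$, is elliptic. Since $H_0$ is finitely generated, so is each $H_i$, and thus each is generated by a finite set of elliptic elements. Corollary \ref{c:ell}, applied to the $(d+1)$ pairwise conjugate subgroups $H_0, \dots, H_d$ acting on a space of dimension strictly less than $d+1$, then yields a global fixed point for $D = \langle H_0, \dots, H_d \rangle$. The only subtlety in executing this plan is keeping the off-by-one in the dimension count consistent between the theorem statement, the preceding lemma, and Corollary \ref{c:ell}; beyond this bookkeeping, each step is a direct application of results already in hand.
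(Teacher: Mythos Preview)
Your proposal is correct and follows exactly the same approach as the paper: first use the no-$(d+1)$-flats lemma (together with ${\rm Hom}(H_i,\R)=0$ and the fact that conjugation preserves hyperbolicity) to rule out hyperbolic elements in any $H_i$, then apply Corollary~\ref{c:ell} with $d+1$ subgroups in a space of dimension at most $d$. You have simply spelled out details that the paper leaves implicit, including the verification of ${\rm Hom}(H_i,\R)=0$ and the off-by-one bookkeeping.
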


\begin{proof} The hypothesis $\dim (X)\le d$ is stronger than requiring that
$X$ contains no $(d+1)$-flat, so the preceding lemma tells us that there are no hyperbolic elements in
any of the subgroups of $H_i$. Because $H_0$ is finitely generated, Corollary \ref{c:ell}  completes the proof.  
\end{proof}

The following result was stated as Theorem B in the introduction.

\begin{corollary}\label{c:fixd}
If $A$ is a finitely generated group with finite abelianisation and $B$ is a finite group, then
$A\wr B$ has $\Fix_d$ where $d= |B|-1$.
\end{corollary}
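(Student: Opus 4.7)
The plan is to apply Theorem \ref{t:fix} directly, with a natural choice of subgroup dictated by the wreath product structure, and then upgrade the fixed point for that subgroup to a global fixed point using the finiteness of $B$.

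First, I would identify the candidate subgroup. Enumerate $B = \{b_0, b_1, \dots, b_d\}$, where $d = |B|-1$, and set $H_i := A_{b_i}$, so that the base group decomposes as
\[
D := A^B = A_{b_0} \times A_{b_1} \times \cdots \times A_{b_d} < A \wr B.
\]
The permutation action of $B$ on the factors is realised by inner automorphisms of $A\wr B$: for each $i$, conjugation by the element $b_i b_0^{-1} \in B < A\wr B$ sends $H_0$ onto $H_i$. Since $H_0 \cong A$ is finitely generated and has finite abelianisation by hypothesis, the subgroups $H_0, \dots, H_d$ satisfy the hypotheses of Theorem \ref{t:fix}.

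Now suppose $A\wr B$ acts by semisimple isometries on a complete \cat space $X$ of dimension at most $d$. Theorem \ref{t:fix} applied to $G = A\wr B$ and $D = H_0 \times \cdots \times H_d$ yields that $\fix(D) \subset X$ is non-empty. Because $D = A^B$ is normal in $A\wr B$, the subgroup $B < A\wr B$ preserves $\fix(D)$. The set $\fix(D)$ is closed and convex in $X$, hence is itself a complete \cat space, and $B$ acts on it by semisimple isometries. Since $B$ is finite, Proposition \ref{l:finite} provides a point of $\fix(D)$ fixed by $B$; this point is fixed by $\<D, B\> = A\wr B$.

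There is no real obstacle beyond correctly matching the conjugacy and normality data to the hypotheses of the cited results; the only point that deserves care is the observation that $D$ is normal in $A\wr B$, which is exactly what lets $B$ act on $\fix(D)$ so that the finite-group fixed-point argument applies on top of Theorem \ref{t:fix}.
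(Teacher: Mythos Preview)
Your proof is correct and follows essentially the same route as the paper's: apply Theorem \ref{t:fix} to the base group $D=A^B$ viewed as the product of the pairwise conjugate factors $A_{b_i}$, then use that $B$ normalises $D$ together with Proposition \ref{l:finite} to locate a $B$-fixed point inside $\fix(D)$. The only difference is cosmetic---you name the conjugating elements $b_i b_0^{-1}$ explicitly, whereas the paper leaves this implicit.
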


\begin{proof} Let $G=A\wr B= (\oplus_{b\in B}A_b)\rtimes B $ and $D=\oplus_{b\in B}A_b$. 
Theorem \ref{t:fix}   tells us that $D$ has a fixed point whenever $A\wr B$ acts by semisimple isometries on a 
complete \cat space $X$ with $\dim(X)\le |B|-1$. Because $B<A\wr B $ normalises $D$, it leaves
its set of fixed points $\Fix(D)\subset X$ invariant.  $\Fix(D)$ is closed and convex, hence a complete
\cat space. Proposition \ref{l:finite} provides a point in $\Fix(D)$ that is fixed by $B$ and hence by $A\wr B=\<D,B\>$.
\end{proof}

\section{Proof of Theorem \ref{t:main}}
Let $Q$ be a group satisfying the conditions listed in Section \ref{s:groups}.  
By Proposition \ref{p:rips}, there is
a short exact sequence
$$
1\to N \to G\to Q\to 1
$$
with $N$ finitely generated and perfect, and $G$ finitely presented, residually finite and perfect. Given $d\ge 2$, we fix
a finite group $B$ with $|B|=d+1$. Proceeding as in Section \ref{s:fibW}, we take the direct product of $|B|$ copies
of this sequence, indexed by the elements of $B$, and take the fibre product of the maps $G_b\to Q$ to obtain
$$N^B\hookrightarrow P_B \hookrightarrow G^B.$$
The action of $B$ permuting the direct factors of $G^B$ leaves $N^B$ and $P_B$ invariant, so the
above inclusions extend to
$$
N\wr B \overset{i}\hookrightarrow P_B\rtimes B  \overset{j}\hookrightarrow G\wr B.
$$
We claim that this  triple of groups has the properties required in Theorem~\ref{t:main}. 

Towards showing that $i$  induces an
isomorphism of profinite completions, note first that Lemma \ref{l:PT} applies to $N^B\hookrightarrow P_B$,
because $N^B$ is normal in $P_B$ with quotient $Q$. Likewise, Theorem \ref{t:fib} assures us that $P_B\hookrightarrow G^B$, the
restriction of $j$, induces an isomorphism of profinite completions. The action of $B$ permuting the factors of $G^B$ 
extends to $\wh{G^B}$, where it preserves the dense subgroups $P_B$ and $N^B$. Noting that
$\wh{N}\wr B = \wh{N\wr B}$ and $\wh{G}\wr B = \wh{G\wr B}$, we conclude that 
$\wh{i}$ and $\wh{j}$ are isomorphisms extending
$\wh{N^B}\to\wh{P_B}$ and $\wh{P_B}\to \wh{G^B}$. This establishes Theorem \ref{t:main}(1).

$N\wr B$ is finitely generated, since $N$ is. Corollary \ref{Pfp} assures us that $P_B$ is finitely presented,
whence the finite extension $P_B\rtimes B$ is too. By construction, $Q$ is of typre $F_\infty$, and therefore so 
is $Q\wr B$.
(And we could take it to be virtually of type $\F$ if desired.) This establishes Theorem \ref{t:main}(2).

$N$ and $G$ are finitely generated and perfect, so Corollary \ref{c:fixd} tells us that $N\wr B$ and $G\wr B$ have $\Fix_d$,
since $|B|=d+1$. In contrast, $P_B\rtimes B$ maps onto $Q$, as in Lemma \ref{l:ontoQ}, and therefore it is a non-trivial
amalgamated free product -- in particular it does not have property FA or $\Fix_d$.
\qed

\section{Flexibility and a decision problem}\label{s:last}

It is clear from the discussion in Section \ref{s:groups} that there is a great deal of flexibility in how one chooses the
input groups $Q$. Consequently, one is free to impose various extra conditions on the Grothendieck pairs 
$P\rtimes B\hookrightarrow G\wr B$ that we have constructed. In particular,
 the range of pairs that one obtains is sufficient to accommodate many of the undecidability phenomena described in
\cite{mb:karl} and elsewhere. For example, by following the proof of \cite[Theorem B]{mb:karl} we obtain the
following theorem. Similar results hold with condition $\Fix_d$ in place of FA.

\begin{theorem}\label{t:iso}
There does not exist an algorithm that, given a 
finitely presented, residually finite group $\G$ that has property FA and a finitely
presentable subgroup $u:P\hookrightarrow\G$ with
$\hat u:\hat P\to \hat \G$ an isomorphism, can determine whether or
not $P$ has property FA.
\end{theorem}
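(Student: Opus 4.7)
The plan is to exhibit a recursive family of pairs produced by the construction of Theorem~\ref{t:main}, indexed by words $w$ in a finite alphabet, so that the question of whether $P_w$ has property FA is equivalent to a classical undecidable property of $w$. As the statement directs, I follow the template of \cite[Theorem~B]{mb:karl}: embed the triviality problem for finitely presented groups into the construction and show that the question of FA for the subgroup tracks triviality of the input group $Q$.

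First I would assemble the input. Following the discussion in Section~\ref{s:groups} together with an Adyan--Rabin style argument (as used in \cite{mb:karl}), one obtains a recursive sequence of finite presentations $\mathcal{Q}_w$ such that each $Q_w$ is either trivial or satisfies all four bullet points of Section~\ref{s:groups}, and no algorithm decides which alternative occurs. Concretely, one starts from a recursive family of finitely presented groups for which triviality is undecidable, applies an embedding into a finitely presented group with no non-trivial finite quotients \cite{mb:embed}, passes to the universal central extension to kill $H_2(-,\Z)$, and doubles via a free product to impose the amalgamated-product structure; all of these operations can be performed uniformly from $\mathcal{Q}_w$ and collapse the trivial input to a trivial output.

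Next, I would run the rest of the construction of Theorem~\ref{t:main} uniformly in $w$. Fix a finite group $B$ with $|B|=d+1$ (for this theorem $d=1$ suffices). Proposition~\ref{p:rips} produces a short exact sequence $1\to N_w\to G_w\to Q_w\to 1$ algorithmically; forming $\Gamma_w := G_w\wr B$ and $P_w := P_{w,B}\rtimes B$ as in Section~\ref{s:fibW} is then algorithmic. The proof of Theorem~\ref{t:main} applies verbatim to show that $P_w\hookrightarrow \Gamma_w$ induces an isomorphism of profinite completions, that $\Gamma_w$ and $P_w$ are finitely presented and residually finite, and that $\Gamma_w$ has property FA regardless of whether $Q_w$ is trivial (the perfectness conclusion in Proposition~\ref{p:rips}(5) holds for trivial $Q_w$ as well, so Corollary~\ref{c:fixd} still applies). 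The key dichotomy is that $P_w$ has FA if and only if $Q_w$ is trivial: when $Q_w=1$ the Rips sequence forces $N_w=G_w$, whence $P_{w,B}=G_w^B$ and $P_w=G_w\wr B=\Gamma_w$ inherits FA; when $Q_w\neq 1$, Lemma~\ref{l:ontoQ} exhibits a surjection $P_w\twoheadrightarrow Q_w$ onto a non-trivial amalgamated free product, so $P_w$ acts on a tree without a global fixed point. A decision procedure for FA on the family $\{P_w\}$ would therefore decide triviality of $\{Q_w\}$, a contradiction.

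The main obstacle is the first step: arranging an Adyan--Rabin family that meets all of the Section~\ref{s:groups} hypotheses simultaneously and in a uniform way, in particular ensuring that the three operations (embedding into a group with trivial profinite completion, passing to the universal central extension, and doubling via free product) preserve type $F_\infty$ and preserve the recursive indexing, while mapping the trivial case to the trivial case. Once these bookkeeping points are verified, the rest of the argument is a direct transcription of the proof of Theorem~\ref{t:main} with the resulting family $Q_w$ plugged in.
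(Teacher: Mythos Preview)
Your proposal is correct and follows essentially the same route as the paper: reduce to the undecidability of triviality for a recursive family of presentations $Q_w$ that satisfy the Section~\ref{s:groups} conditions whenever $Q_w\neq 1$, run the Theorem~\ref{t:main} construction with $B=\Z/2$ uniformly in $w$, and observe that $P_w$ has FA if and only if $Q_w=1$. The only notable difference is in how the input family is manufactured: the paper appeals directly to the Collins--Miller sequence \cite{CM}, enhanced as in \cite{mb:karl}, rather than building $Q_w$ via the three-step process (embed into a group with trivial profinite completion, take the universal central extension, then double) that you extract from Section~\ref{s:groups}; using \cite{CM}$+$\cite{mb:karl} sidesteps precisely the bookkeeping issue you flag, namely verifying that your three operations send the trivial group to the trivial group (the embedding step of \cite{mb:embed} does not obviously do this).
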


\begin{proof}  As in \cite{mb:karl}, one can enhance the groups constructed in \cite{CM} to obtain
a recursive sequence of finite presentations $\mathcal{Q}^{(m)} \equiv \< S \mid R^{(m)}\>$ for groups $Q^{(m)}$,
with $S$ and $|R^{(m)}|$ fixed, so that (i)
there is no algorithm to determine which of the groups are trivial, but (ii) if $Q^{(m)}\neq 1$ then it 
satisfies the properties listed 
in Section \ref{s:groups}. We apply the algorithm of Proposition \ref{p:rips} to the presentations $\mathcal{Q}^{(m)}$
to obtain $G^{(m)}\twoheadrightarrow Q^{(m)}$,
with an explicit presentation for $G^{(m)}$ and hence $G^{(m)}\times G^{(m)}$. The fibre product
$P^{(m)}\hookrightarrow G^{(m)}\times G^{(m)}$
 is given by the finite generating described in Lemma \ref{l:fg}, with $B_i$ the
given relators of $G^{(m)}$. Theorem \ref{t:123} assures us that $P^{(m)}$ is finitely presentable.
We pass from $P^{(m)}\hookrightarrow G^{(m)}\times G^{(m)}$
 to $u_m: P^{(m)}\rtimes (\Z/2)\hookrightarrow G^{(m)}\wr (\Z/2)$ and
then argue as in the proof of Theorem \ref{t:main} to see that $u_m$ induces an isomorphism of profinite completions,
that $G^{(m)}\wr (\Z/2)$ has property FA (in fact $\Fix_1$),
and that $ P^{(m)}\rtimes (\Z/2)$ maps onto $Q^{(m)}$. 

Note that the groups $G^{(m)}\wr (\Z/2)$ are given by a recursive sequence of presentations and the maps $u_m$ are given by a recursive sequence of generating sets for the subgroups $P^{(m)}\rtimes (\Z/2)$.

If $Q^{(m)}\neq 1$ then 
$ P^{(m)}\rtimes (\Z/2)$ does not have FA, since it maps onto $Q^{(m)}$. 
But if $Q^{(m)}= 1$ then $u_m$ is an isomorphism, so $ P^{(m)}$ does  have FA. And by
construction, there is no algorithm to decide which of these alternatives holds. 
\end{proof}


\begin{thebibliography}{55}
  
  
  \bibitem{aka}
  M.~Aka, 
  {\em Profinite completions and Kazhdan’s property}, Groups Geom. Dynam. {\bf 6} (2012), 221--229.

\bibitem{BL}
H.~Bass and A.~Lubotzky, \emph{Nonarithmetic superrigid groups: counterexamples
to Platonov's conjecture},
Annals of Math.   {\bf 151} (2000), 1151--1173.
  

	
  
 \bibitem{mb:karl}  M.~R.~Bridson,
{\em Decision problems and profinite completions of groups}, J. Algebra {\bf{326}} (2011), 59--73.

\bibitem{mb:embed}  M.R.~Bridson,
{\em Controlled embeddings into groups that have no nontrivial finite quotients},
Geom. Topol. Monogr. {\bf 1} (1998), 99--116. 


  
\bibitem{mb:helly} M.R.~Bridson, {\em Helly's theorem, {\rm{CAT}}$(0)$ spaces,
and actions of automorphism groups of free groups}, preprint, 
Oxford 2007.

\bibitem{mb:mcg}   M.R.~Bridson, {\em
On the dimension of  CAT$(0)$ spaces where mapping class groups act},
J. Reine Angew. Math.,  {\bf 673} (2012), 55--68.

\bibitem{mb:pcmi} M.R.~Bridson, 
{\em Cube complexes, subgroups of mapping class groups, and nilpotent genus},
in ``Geometric Group Theory", IAS/Park City Math. Ser., 21, Amer. Math.
Soc., Providence, RI, 2014, pp. 379--399.


\bibitem{mb:jems} M.R.~Bridson, 
{\em The strong profinite genus of a finitely presented group can be infinite},
 J. Eur. Math. Soc. (JEMS) {\bf 18} (2016), no. 9, 1909--1918.


\bibitem{mb:gilb} M.R.~Bridson, 
{\em The homology of groups, profinite completions, and echoes of Gilbert
Baumslag}, in ``Elementary Theory of Groups and Group Rings, and Related
Topics", pp. 11--28, De Gruyter, Berlin, 2020. 

\bibitem{BG}  M.~R.~Bridson and F.J.~Grunewald {\em{Grothendieck's problems concerning
 profinite completions and representations of groups}}, { Annals of Math.} (2) {\bf 160} (2004), 359--373.
                              

\bibitem{BH}
 M.R.~Bridson and A.~Haefliger,
``Metric Spaces of Non-Positive Curvature", Grund. Math. Wiss.
{\bf 319}, Springer-Verlag, Heidelberg-Berlin, 1999.
 
   

\bibitem{bhms}
M.R. Bridson, J. Howie, C.F. Miller~III and H.~Short,
\newblock {\em On the finite presentation of subdirect products and 
the nature of residually free groups.}
\newblock Amer. J. Math., {\bf{135}} (2013), 891--933. 
 
  


\bibitem{tam} T.~Cheetham-West, A.~Lubotzky, A.W.~Reid, R.~Spitler,
{\em Property FA is not a profinite property}, Groups Geom. Dynam., to appear.
arXiv:2212.08207 
  
  
\bibitem{CM} D.J.~Collins and C.F.~Miller III,  {\em The word problem in groups of cohomological dimension 2},  in ``Groups St. Andrews in Bath" (C.M. Campbell, E.F. Robertson, G.C. Smith, eds.), London Math. Soc. Lecture Note Ser. {\bf 270}, 1998, pp. 211--218.
  
 

\bibitem{groth} A.~Grothendieck, {\em Representations lin\'{e}aires et 
compatifications profinie des groupes discrets}, Manuscripta Math. {\bf 2} 
(1970), 375--396.

 
 
 

\bibitem{PT} V. P. Platonov and O. I. Tavgen, {\em Grothendieck's problem
on profinite completions and representations of groups}, K-Theory {\bf 4}
(1990), 89--101. 


\bibitem{RZ} L.~Ribes and P.A.~Zalesskii, {\em Profinite Groups}, 
Ergeb.~der Math.~{\bf 40}, Springer-Verlag, Berlin, New York, 2000.

\bibitem{rips}
E. Rips,  {\em Subgroups of small cancellation groups},
Bull. London Math Soc. {\bf 14} (1982), 45--47.  
 
 
\bibitem{wise1} D.~T.~Wise,
\newblock {\em Cubulating small cancellation groups},
\newblock GAFA, {\bf 14} (2004), 150--214. 

\bibitem{wise2}
D.~T.~Wise, \emph{A residually finite version of Rips's
construction},
Bull. London Math. Soc. {\bf 35} (2003), 23-29.
 
 
\end{thebibliography}
\end{document}